\newtheorem{theorem}{Theorem}
\newtheorem{lemma}{Lemma}
\newtheorem{proposition}[theorem]{Proposition}
\newtheorem{conjecture}{Conjecture}
\newcommand{\be}{\begin{equation}}
\newcommand{\ee}{\end{equation}}
\newcommand{\bea}{\begin{eqnarray}}
\newcommand{\eea}{\end{eqnarray}}
\def\XXint#1#2#3{{\setbox0=\hbox{$#1{#2#3}{\int}$ }
\vcenter{\hbox{$#2#3$ }}\kern-.6\wd0}}
\begin{document}

\title[Almost abelian groups with constant $H$]{A note on almost abelian groups with constant holomorphic sectional curvature}

\author{Yulu Li}
\address{Yulu Li. School of Mathematical Sciences, Chongqing Normal University, Chongqing 401331, China}
\email{{1320779072@qq.com}}

\author{Fangyang Zheng} \thanks{Li is supported by Chongqing Graduate Student Research Innovation Program (Grant No. CYB23230). The corresponding author Zheng is supported by NSFC grants \# 12141101 and \# 12471039, by Chongqing Normal University grant 24XLB026, and is supported by the 111 Project D21024.}
\address{Fangyang Zheng. School of Mathematical Sciences, Chongqing Normal University, Chongqing 401331, China}
\email{{20190045@cqnu.edu.cn}}

\subjclass[2020]{53C55 (primary), 53C05 (secondary)}
\keywords{holomorphic sectional curvature; Chern connection; almost abelian Lie algebras; Hermitian manifold;  }

\begin{abstract}
A long-standing conjecture in non-K\"ahler geometry states that if the Chern (or Levi-Civita) holomorphic sectional curvature of a compact Hermitian manifold is a constant $c$, then the metric must be K\"ahler when $c\neq 0$ and must be Chern (or Levi-Civita) flat when $c=0$. The conjecture is known to be true in dimension 2 by the work of Balas-Gauduchon, Sato-Sekigawa, and Apostolov-Davidov-Muskarov in the 1980s and 1990s. In dimension 3 or higher, the conjecture is still open except in some special cases, such as for all twistor spaces by Davidov-Grantcharov-Muskarov, for locally conformally K\"ahler manifolds (when $c\leq 0$) by Chen-Chen-Nie, etc. In this short note, we consider compact quotients $G/\Gamma$ where $G$ is a Lie group equipped with a left-invariant complex structure and a compatible left-invariant metric, and $\Gamma$ is a discrete subgroup. We confirm the conjecture when the Lie algebra ${\mathfrak g}$ of $G$  either is almost abelian, or contains a $J$-invariant abelian ideal of codimension 2. 
\end{abstract}

\maketitle


\markleft{Li and Zheng}
\markright{Almost abelian groups with constant holomorphic sectional curvature}

\section{Introduction and statement of result}

In differential geometry, the simplest kind of manifolds are those with constant curvature. In the Riemannian setting, complete Riemannian manifolds with constant sectional curvature are called {\em space forms}. Their universal covers are the sphere $S^n$, the Euclidean space ${\mathbb R}^n$, or the hyperbolic space ${\mathbb H}^n$, equipped with (a constant multiple of) the standard metrics. For K\"ahler manifolds, the sectional curvature could no longer be constant unless it is flat, so instead one requires the holomorphic sectional curvature to be constant. Analogous to the real case, complete K\"ahler manifolds with constant  holomorphic sectional curvature are called {\em complex space forms}, and their universal covers are the complex projective space ${\mathbb C}{\mathbb P}^n$, the complex Euclidean space ${\mathbb C}^n$, or the complex hyperbolic space ${\mathbb C}{\mathbb H}^n$, equipped with (a constant multiple of) the standard metrics. 

In the complex case, it is a natural question to ask what would happen if one drops the K\"ahlerness assumption, namely, what kind of Hermitian manifolds would have constant holomorphic sectional curvature? Of course, one needs to specify which connection is used for the curvature. Given a Hermitian manifold $(M^n,g)$, denote by $\nabla$ the Chern connection and $R$ its curvature. Then the (Chern) holomorphic sectional curvature is defined by
$ H(X) =R_{X\overline{X}X\overline{X}}/|X|^4$, 
where $X\neq 0$ is any complex tangent vector of type $(1,0)$. Similarly, if we denote by $\nabla^r$ the Levi-Civita (also known as Riemannian) connection, and $R^r$ its curvature tensor, then the (Levi-Civita)  holomorphic sectional curvature $H^r$ is defined similarly as above when $R$ is replaced by $R^r$. Note that when the metric $g$ is K\"ahler, it is well-known that $\nabla =\nabla^r$, hence $R=R^r$. Conversely, when $R=R^r$, it was proved by Yang and the second named author in \cite[Theorem 1]{YZ} that $g$ must be K\"ahler (see Proposition \ref{prop0} in \S 2 for a related observation). A long-standing open question in non-K\"ahler geometry is the following:

\begin{conjecture}[{\bf Constant Holomorphic Sectional Conjecture}] \label{conj1}
Let $(M^n,g)$ be a compact Hermitian manifold with  $H$  (or $H^r$) equal to a constant $c$. If $c\neq 0$, then $g$ must be K\"ahler (hence $(M^n,g)$ is a complex space form). If $c=0$, then $g$ must be Chern (or Levi-Civita) flat, namely, $R=0$ (or $R^r=0$).
\end{conjecture}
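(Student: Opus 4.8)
The plan is to treat the constancy of $H$ as a strong pointwise algebraic constraint on the Chern curvature, to try to promote it to a statement about the \emph{full} curvature tensor $R$, and then to use the compactness of $M^n$ to eliminate the remaining torsion contributions. Working in a local unitary frame $\{e_1,\dots,e_n\}$ of $(1,0)$-vectors and writing the components $R_{i\ol{j}k\ol{l}}$, the hypothesis $H(X)=c$ for every $X=\sum x_ie_i$ reads $\sum R_{i\ol{j}k\ol{l}}\,x_i\ol{x}_jx_k\ol{x}_l = c\,(\sum|x_i|^2)^2$ identically in $x$. Since $x_ix_k$ and $\ol{x}_j\ol{x}_l$ are symmetric, this quartic identity only sees the symmetrized part of $R$; polarizing it gives, for $\widetilde{R}_{i\ol{j}k\ol{l}}=\frac14(R_{i\ol{j}k\ol{l}}+R_{k\ol{j}i\ol{l}}+R_{i\ol{l}k\ol{j}}+R_{k\ol{l}i\ol{j}})$, the explicit form $\widetilde{R}_{i\ol{j}k\ol{l}}=\tfrac{c}{2}(\delta_{ij}\delta_{kl}+\delta_{il}\delta_{kj})$. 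In the K\"ahler case the first Bianchi identity forces $R_{i\ol{j}k\ol{l}}=R_{k\ol{j}i\ol{l}}$, so $\widetilde R=R$ and one recovers the classical constant-holomorphic-curvature form; a Schur-type argument then makes $c$ a global constant (for $n\ge2$) and yields the space-form conclusion. The whole difficulty is therefore concentrated in the non-K\"ahler correction $\Xi=R-\widetilde R$, which is a quadratic expression in the Chern torsion and which $H$ alone cannot detect.

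Next I would take traces of the polarized identity. One contraction gives the symmetrized Chern--Ricci condition $\sum_i\widetilde{R}_{i\ol{i}k\ol{l}}=\tfrac{c(n+1)}{2}\delta_{kl}$, a Hermitian-Einstein-type equation; a second contraction pins the \emph{sum} of the two Chern scalar curvatures pointwise, $\sum_{i,k}R_{i\ol{i}k\ol{k}}+\sum_{i,k}R_{i\ol{k}k\ol{i}}=c\,n(n+1)$. Compactness would enter through the companion fact that on a compact Hermitian manifold the \emph{difference} of the two Chern scalar curvatures integrates, via a divergence (Gauduchon-type) identity, to a quadratic functional of the torsion. The goal is to combine the pointwise sum relation coming from constant $H$ with this global relation for the difference so that an integral of the form $\int_M|T|^2\,dV$ is forced to have a definite sign and hence to vanish: vanishing torsion means $g$ is K\"ahler when $c\neq0$, and then $R=\widetilde R=0$ gives Chern flatness when $c=0$. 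Running this against a fixed Gauduchon representative of the conformal class, so that the pure-divergence terms integrate away, is the mechanism I would try to make the two scalar curvatures match.

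The main obstacle is exactly the step just described. In full generality the quadratic torsion functional produced by these trace relations is \emph{not} of a definite sign, and constant $H$ controls only $\widetilde R$ while leaving the correction $\Xi$ essentially free; there is no replacement for the first Bianchi identity to recover $R$ from its symmetrization. This is precisely why the conjecture is open beyond dimension $2$ and is established only under hypotheses (a favourable sign of $c$, the LCK condition, or the twistor structure) that constrain the torsion or render $\Xi$ tractable. The one unconditional route I would trust is to impose enough structure that $R$, its symmetrization, and the torsion all become \emph{explicit}: in a homogeneous model $G/\Gamma$ with a left-invariant Hermitian structure, $R_{i\ol{j}k\ol{l}}$ becomes a polynomial in the structure constants of $\mathfrak g$, the constant-$H$ equations reduce to a finite algebraic system, and $\Xi$ can be computed rather than estimated, so that the only admissible solutions are the K\"ahler or flat ones. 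The Levi-Civita version $H^r$ would be attacked the same way after expressing $R^r$ through $R$ and the torsion; but absent such a structural reduction I do not expect the integral method alone to close the general statement.
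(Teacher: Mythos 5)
The statement you were asked to prove is a conjecture: the paper does not prove it, and no proof of it is known. Your write-up is correctly calibrated rather than mistaken --- you do not claim a proof, and your diagnosis of the obstruction is exactly the paper's. Your polarized identity $\widetilde{R}_{i\ol{j}k\ol{\ell}}=\frac{c}{2}(\delta_{ij}\delta_{k\ell}+\delta_{i\ell}\delta_{kj})$ is precisely the paper's characterization of $H=c$ (their $\widehat R$ in (1)--(2)), and your observation that constant $H$ pins down only the symmetrized curvature while the correction $\Xi=R-\widetilde R$ is an undetermined torsion-quadratic term of no definite sign is the paper's own stated reason the conjecture is open for $n\geq 3$. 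The Gauduchon-type integral scheme you sketch for playing the two Chern scalar curvatures against each other is a known attack that indeed does not close in general; you are right not to trust it.

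Your proposed fallback is exactly the paper's actual program. They isolate the Lie-Hermitian case (compact $M=G/\Gamma$ with left-invariant $J$ and $g$) as Conjecture 2 and prove it when $\mathfrak{g}$ is almost abelian or contains a $J$-invariant abelian ideal of codimension $2$: in an admissible frame the Chern curvature becomes an explicit polynomial in the structure constants ($\lambda$, $v$, $A$, respectively $\lambda$, $v$, $X$, $Y$, $Z$), the Levi-Civita curvature is recovered from $R$ and the torsion via the Yang--Zheng formulas (their Lemmas 1--2), and the constant-$H$ equations reduce to a small algebraic system forcing $c=0$ and flatness --- just as you predicted the homogeneous reduction would behave. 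Two refinements in the paper that your sketch does not anticipate: first, in the Lie setting the branch $c\neq 0$ can never occur at all, because by Hano's theorem a unimodular K\"ahler Lie algebra is flat, so the K\"ahler alternative collapses into the flat one; second, for the Levi-Civita version unimodularity is genuinely needed --- the paper exhibits a non-unimodular almost abelian Hermitian Lie algebra with constant negative $H^r$ that is not K\"ahler. This confirms your instinct that compactness must enter essentially, but shows that in the homogeneous reduction it enters through the algebraic condition of unimodularity (a trace identity on the structure constants) rather than through any integration-by-parts argument.
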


Note that the compactness assumption in the conjecture is necessary, otherwise there are counterexamples. By the classic result of Boothby \cite{Boothby}, compact Chern flat manifolds are exactly compact quotients of complex Lie groups (equipped with left-invariant metrics). Also, by the classic theorem of Bieberbach, compact Riemannian flat manifolds are finite undercovers of flat tori. Given a flat torus $(T^{2n},g)$ of real dimension $2n$, there are many complex structures $J$ compatible with the given flat metric $g$, turning $(T^{2n},J,g)$ into  Hermitian manifolds of complex dimension $n$  that are Levi-Civita (namely, Riemannian) flat. It is not hard to see that $g$ is K\"ahler with respect to $J$ if and only if $(T^{2n},J)$ is a complex $n$-torus, and when $n\leq 2$, these are the only possibilities. For each $n\geq 3$, however, there are always other complex structures $J$ for some flat $2n$-tori to form non-K\"ahler examples of Riemannian flat Hermitian $n$-manifolds. For $n=3$ there is a full classification of all such manifolds by Khan, Yang and the second named author \cite{KYZ}, but for $n\geq 4$, the classification is still unknown.   

Back to Conjecture \ref{conj1}, the $n=2$ case is known to be true. For the Chern connection, the $c\leq 0$ case was proved by Balas and Gauduchon in 1985 in \cite{Balas,BG}, while the $c>0$ case was solved by Apostolov, Davidov and Muskarov in \cite{ADM} in 1996. For the Levi-Civita connection, the $c\leq 0$ case was proved by Sato and Sekigawa in 1990 \cite{SS}, and the $c>0$ case was again solved by \cite{ADM}. 

For $n\geq 3$, Conjecture \ref{conj1} is still largely open except in a number special cases. The first substantial result towards the conjecture is the one obtained by Davidov-Grantcharov-Muskarov \cite{DGM}, in which they showed among other things that the only twistor space with constant holomorphic sectional curvature is the complex space form ${\mathbb C}{\mathbb P}^3$. More recently, Chen-Chen-Nie \cite{CCN} considered locally conformally K\"ahler manifolds and confirmed the conjecture in the $c\leq 0$ cases. They also pointed out the necessity of the compactness assumption in the conjecture by explicit examples. Tang in \cite{Tang} proved the conjecture under the additional assumption that the metric is Chern K\"ahler-like, meaning that the curvature tensor $R$ obeys all the K\"ahler symmetries. 

Zhou and the second named author in \cite{ZhouZ} proved that any compact Hermitian threefold with vanishing real bisectional curvature must be Chern flat. Real bisectional curvature is a curvature notion introduced by X. Yang and the second named author in \cite{YangZ}. It is equivalent to $H$ in strength when the metric is K\"ahler, but is slightly stronger than $H$ when the metric is non-K\"ahler.

In \cite{RZ}, Rao and  the second named author confirmed the conjecture under the additional assumption that $g$ is {\em Bismut-K\"ahler like,} meaning that the curvature tensor $R^b$ of the Bismut connection $\nabla^b$ obeys all K\"ahler symmetries. This is a highly restrictive class of manifolds which has drawn attention in recent years, see for instance \cite{AOUV,  YZZ, ZZJGP, ZZCrelle}. In \cite{CZ1}, Chen and the second named author explored the analogous conjecture for Bismut connection and established the answers for the $2$-dimensional case, utilizing the fundamental work of \cite{ADM}. They explored further in \cite{CZ2} for these questions in the broader set of {\em Bismut torsion parallel} manifolds, meaning those Hermitian manifolds whose Bismut connection has parallel torsion. We refer the readers to \cite{ZhaoZ24} and references therein for more discussion on this interesting class of special Hermitian manifolds. 
 
Note that a major difficulty in dealing with Conjecture \ref{conj1} is due to the algebraic complexity of the curvature tensor of Hermitian manifolds. When the metric is non-K\"ahler, the curvature tensor (for Chern, Levi-Civita, or Bismut connection)  does not obey the K\"ahler symmetries in general, and the holomorphic sectional curvature does not control the entire curvature tensor (instead, it only controls the `symmetrized part' of the curvature tensor). So as a useful testing ground for the conjecture, let us restrict our attentions to the subset of {\em Lie-Hermitian manifolds}, by which we mean compact Hermitian manifolds in the form $M=G/\Gamma$, where $G$ is a Lie group and $\Gamma \subset G$ a discrete subgroup. The complex structure $J$ and metric $g$ on $M$, when lifted onto $G$, are both left-invariant. Note that there are plenty of examples of such $(G,J,g)$ where $J$ is not bi-invariant, namely, $G$ is not a complex Lie group. We will call such a manifold a {\em complex nilmanifold} if $G$ is nilpotent, and a {\em complex solvmanifold} if $G$ is solvable. One can certainly restrict Conjecture \ref{conj1} to the special case of Lie-Hermitian manifolds and ask for the weaker question. Note that the Lie group $G$ is unimodular since it admits a compact quotient. By the result of Hano \cite{Hano}, any unimodular K\"ahler Lie algebra must be flat.  Therefore, the Lie-Hermitian special case of Conjecture \ref{conj1} is reduced to the following

\begin{conjecture} \label{conj2}
Let $(M^n,g)$ be a Lie-Hermitian manifold, namely, a compact Hermitian manifold in the form $M=G/\Gamma$ where $G$ is a Lie group, $\Gamma \subset G$ a discrete subgroup, and the complex structure and metric when lifted to $G$ are left-invariant. If the holomorphic sectional curvature for the Chern (or Levi-Civita) connection is a constant $c$, then $c$ must be zero and $g$ must be Chern (or Levi-Civita) flat.
\end{conjecture}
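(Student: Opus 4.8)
The plan is to reduce Conjecture \ref{conj2} to a purely Lie-algebraic assertion and then attack it through the scalar-curvature identities that constant holomorphic sectional curvature forces. Since $J$ and $g$ are left-invariant, the whole curvature problem descends to the Hermitian Lie algebra $({\mathfrak g},J,g)$: fix a unitary frame $e_1,\dots,e_n$ of $(1,0)$ vectors, record the bracket through the structure constants $C^k_{ij}$ and $D^k_{i\bar j}$ (for $[e_i,e_j]$ and $[e_i,\bar e_j]$), and observe that the Chern connection coefficients, the torsion $T$, and all the curvature components $R_{i\bar jk\bar l}$ become explicit quadratic polynomials in these constants. The crucial extra input is that $G$ is unimodular, since it carries the lattice $\Gamma$; thus $\tr(\mathrm{ad}_x)=0$ for every $x$, which in the unitary frame amounts to a family of linear trace relations among the $C$'s and $D$'s that will be used repeatedly.

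Next I would encode the hypothesis. Polarizing $H\equiv c$ yields the symmetrization identity
\[
R_{i\bar jk\bar l}+R_{k\bar ji\bar l}+R_{i\bar lk\bar j}+R_{k\bar li\bar j}=2c\,(\delta_{ij}\delta_{kl}+\delta_{il}\delta_{kj}),
\]
valid for all indices, which controls only the totally symmetric part of $R$. Contracting $j=i,\ l=k$ and summing gives $s_1+s_2=c\,n(n+1)$, where $s_1=\sum_{i,k}R_{i\bar ik\bar k}$ and $s_2=\sum_{i,k}R_{k\bar ii\bar k}$ are the two Chern scalar curvatures. Compactness of $M=G/\Gamma$ now enters through a homogeneity principle: the divergence of any left-invariant vector field vanishes on $M$, because it is a constant whose integral is zero by Stokes. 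Feeding this into the Gauduchon-type identity that expresses $s_1-s_2$ as a divergence plus a torsion-quadratic term collapses the divergence and leaves a purely algebraic relation among $c$, the structure constants, and $|T|^2$. Together with $s_1+s_2=c\,n(n+1)$ this pins down each scalar curvature in terms of $c$ and the torsion.

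The heart of the matter is then to convert these identities into a sign obstruction forcing $c=0$. Because every $R_{i\bar jk\bar l}$ is quadratic in the structure constants, the expressions for $s_1$ and $s_2$, after the unimodular trace relations are substituted, should reorganize into sums of squares of structure constants with definite signs. Comparing these two expressions with $s_1+s_2=c\,n(n+1)$, I would try to squeeze $c$ simultaneously to $\le 0$ and $\ge 0$, concluding $c=0$. Once $c=0$, the symmetrization identity says the symmetric part of $R$ vanishes, and the remaining task is to upgrade this to $R\equiv 0$; this is exactly where the torsion reappears, since the antisymmetric part of $R$ in the holomorphic slots is governed by $T$ and its Chern derivatives through the first Bianchi identity, and one must show these too are forced to vanish. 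For the Levi-Civita version the same scheme is run after expressing $R^r$ through the Chern curvature and torsion and invoking Hano's rigidity for unimodular K\"ahler Lie algebras.

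I expect the genuine obstacle to be precisely this last passage — recovering the full tensor from its symmetrization — for an \emph{arbitrary} unimodular Hermitian Lie algebra. The symmetrization identity leaves the torsion-quadratic corrections undetermined, and there is neither a classification of unimodular Hermitian Lie algebras nor a uniform normal form for the $C^k_{ij}$ to fall back on, so the resulting polynomial system in the structure constants cannot in general be solved in closed form. This is what makes the full Conjecture \ref{conj2} hard, and it is what suggests first treating Lie algebras whose bracket is controlled by very few structure constants — for instance those possessing a large $J$-invariant abelian ideal — where the system becomes finite and explicit enough to resolve directly.
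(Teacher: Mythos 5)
First, a point of logic: the statement you set out to prove is Conjecture \ref{conj2}, which the paper itself leaves \emph{open}; the paper proves only the two special cases recorded in Theorem \ref{thm} (via Propositions \ref{prop2}, \ref{prop3}, \ref{prop4} and \ref{prop5}), namely when ${\mathfrak g}$ is almost abelian or contains a $J$-invariant abelian ideal of codimension $2$. Your text is accordingly a programme rather than a proof, and you concede as much in your final paragraph. The gaps are genuine and twofold. (i) Your mechanism for forcing $c=0$ --- reorganizing the two Chern scalar quantities $s_1,s_2$, after substituting the unimodular trace relations, into sign-definite sums of squares and squeezing against $s_1+s_2=c\,n(n+1)$ --- cannot work at this level of generality. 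Compact quotients of complex Lie groups (Boothby's Chern-flat manifolds) satisfy $H\equiv 0$ with $s_1=s_2=0$ while the structure constants and the torsion are nonzero, so no definite quadratic identity in the structure constants can follow from the scalar identities alone; indeed the quadratic form in Lemma \ref{lemma4} is visibly indefinite. The paper's explicit non-unimodular example in \S 3 (with $\lambda=1$, $v=0$, $A=I$ and $H^r\equiv -2$, non-K\"ahler) further shows how delicate the sign control is: in the paper unimodularity enters through a single trace relation, $\mathrm{tr}(S)=-\lambda$ (resp.\ $\mathrm{tr}(B)=-\lambda$), inserted at one precise point of the Levi-Civita argument, while the Chern case needs no unimodularity at all --- contrary to your framing of it as the crucial input used ``repeatedly.'' (Your Stokes/divergence step is also redundant: for left-invariant data every such divergence is a constant, and its vanishing is exactly unimodularity, which you already have.) (ii) The passage from $\widehat{R}=0$ to $R=0$ once $c=0$ is precisely the open content of the conjecture: $H$ determines only the symmetrization $\widehat{R}$, and for a general unimodular Hermitian Lie algebra the remaining components are not pinned down by the first Bianchi identity together with scalar identities. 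You correctly identify this obstacle, but you offer no mechanism to overcome it --- and note that the target is $R=0$ with $T$ possibly nonzero, so any attempt to force the torsion itself to vanish in the $c=0$ case would prove too much and contradict the Chern-flat non-K\"ahler examples.

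Your closing suggestion, on the other hand, converges exactly to what the paper actually does in the cases it can handle: choose an \emph{admissible} unitary frame adapted to the abelian ideal, so that the entire bracket reduces to the finite data $(\lambda, v, A)$ in the almost abelian case, or $(\lambda, v, X, Y, Z)$ in the codimension-$2$ case; compute $R_{1\bar{1}1\bar{1}}$, $R_{i\bar{i}i\bar{i}}$, $\widehat{R}_{i\bar{i}k\bar{k}}$ explicitly (and their Levi-Civita analogues via Lemma \ref{lemma2}), obtaining $c\le 0$ from the diagonal terms and $c\ge 0$ after inserting the unimodular trace relation; and then conclude flatness --- in the Levi-Civita case even K\"ahler flatness --- not from Bianchi-identity bookkeeping but from the Guo--Zheng structural characterizations of K\"ahler and Chern-flat metrics (Lemmas \ref{lemma5} and \ref{lemma-codim2}). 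So the honest assessment is: your general-case plan stalls at the two points above, which are the same points that keep Conjecture \ref{conj2} open, while your fallback proposal is essentially the paper's Theorem \ref{thm}, minus the admissible-frame normal forms and the flatness criteria that make the computation close.
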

 
When $g$ is Chern flat, we know that $(M^n,g)$ must be the compact quotient of a complex Lie group $G'$ equipped with a left-invariant metric (compatible with its complex structure). However, our $G$ in Conjecture \ref{conj2} might not be a complex Lie group, and there are indeed examples of such kind. For the Levi-Civita flat case, the result of Milnor \cite[Theorem 1.5]{Milnor} states that, given a Lie group $G$ with a left-invariant metric $g$, it is Levi-Civita flat if and only if the Lie algebra ${\mathfrak g}$ of $G$ is the orthogonal direct sum ${\mathfrak b} \oplus {\mathfrak u}$ where ${\mathfrak b}$ is an abelian subalgebra, ${\mathfrak u}$ is an abelian ideal, and $\mbox{ad}_b$ is skew-adjoint for all $b\in {\mathfrak b}$, namely, 
$$ \langle [b,x],y\rangle + \langle [b,y],x\rangle  = 0, \ \ \ \ \forall \ x,y\in {\mathfrak g}. $$
Note that in this case $G$ is automatically unimodular, that is, $\mbox{tr}(\mbox{ad}_x)=0$ for all $x\in {\mathfrak g}$. Also, $G$ is always $2$-step solvable, but is not nilpotent (unless it is abelian). Lie groups with Hermitian structures that are K\"ahler flat were classified by Barberis-Dotti-Fino in \cite[Prop 2.1, Cor 2,2, Prop 3.1]{BDF}. See also \cite[Appendix]{VYZ} for a summary discussion. 

Regarding Conjecture \ref{conj2}, our previous work \cite{LZ} shows that the Chern connection case holds if $G$ is nilpotent. In fact, we showed that $G$ must be a complex Lie group. It was shown by Chen and the second named author in \cite[Theorem 2 (1)]{CZ3} that the Levi-Civita case holds if $G$ is nilpotent and $J$ is also nilpotent in the sense of \cite{CFGU}. At present we do not know how to prove the Levi-Civita case of Conjecture \ref{conj2} for complex nilmanifolds when the assumption `$J$ is nilpotent'  is dropped. In \cite{HZ}, Huang and the second named author proved the Chern connection case of Conjecture \ref{conj2} for solvable Lie groups $G$ with $J$-invariant commutator.

The main purpose of this article is to prove Conjecture \ref{conj2} in the following two special cases:  when the Lie algebra ${\mathfrak g}$ of $G$ is either almost abelian (meaning that it contains an abelian ideal of codimension $1$), or when it contains an abelian ideal ${\mathfrak a}$ of codimension $2$ satisfying $J{\mathfrak a}={\mathfrak a}$:
\begin{theorem} \label{thm}
Let $(M^n,g)$ be a compact Hermitian manifold with universal cover $(G,J,g)$, where $G$ is a Lie group and both $J$ and $g$ are left-invariant. Assume that the Chern (or Levi-Civita) holomorphic sectional curvature of $g$ is a constant $c$, and  ${\mathfrak a}$ is an abelian ideal in the Lie algebra  ${\mathfrak g}$ of $G$. If either (i) ${\mathfrak a}$ is of codimension $1$, or (ii) ${\mathfrak a}$ is of codimension $2$ such that $J{\mathfrak a}={\mathfrak a}$, then $c=0$ and $g$ is Chern (or Levi-Civita) flat. 
\end{theorem}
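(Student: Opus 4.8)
The plan is to reduce all the structure constants to the single linear operator that defines the semidirect-product structure, and then read the constant–holomorphic–sectional–curvature hypothesis as a system of polynomial equations in its entries. First I would fix a left-invariant unitary frame $\{e_1,\dots,e_n\}$ of type-$(1,0)$ vectors adapted to $\mathfrak a$. In case (ii), $\mathfrak a$ is $J$-invariant of complex codimension $1$, so I let $e_1,\dots,e_{n-1}$ span $\mathfrak a^{1,0}$ and $e_n$ span the orthogonal complement; in case (i), the maximal $J$-invariant subspace $\mathfrak a\cap J\mathfrak a$ has real codimension $2$, so I let $e_1,\dots,e_{n-1}$ span $(\mathfrak a\cap J\mathfrak a)^{1,0}$ and choose $e_n$ so that $a:=\tfrac{1}{\sqrt2}(e_n+\bar e_n)\in\mathfrak a$ while $Ja\notin\mathfrak a$. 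Since $\mathfrak a$ is abelian, every bracket of frame vectors lying in $\mathfrak a_{\mathbb C}$ vanishes, so all nonzero structure constants are carried by $\operatorname{ad}$ of the complementary direction(s): the single operator $\operatorname{ad}_{Ja}$ in case (i), and the pair $\operatorname{ad}_{e_n},\operatorname{ad}_{\bar e_n}$ together with the bracket $[e_n,\bar e_n]$ in case (ii), all subject to the constraint that integrability of $J$ keep $[\mathfrak g^{1,0},\mathfrak g^{1,0}]\subseteq\mathfrak g^{1,0}$. Crucially, because $G$ carries the lattice $\Gamma$ it is unimodular, which in this frame is exactly the trace-free condition on the defining operator; this will be the decisive arithmetic input.

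Next I would substitute this data into the closed-form curvature of a left-invariant Hermitian metric: the Chern connection has constant coefficients expressible through the structure constants, and the Levi-Civita connection differs from it by the explicit tensor built from the Chern torsion. Writing $X=\sum_i x_i e_i$, the identity $R_{X\bar XX\bar X}=c|X|^4$ (resp.\ with $R^r$) is bihomogeneous in $(x_i,\bar x_i)$, and matching coefficients gives the polarized curvature relations
\[
R_{i\bar jk\bar l}+R_{k\bar ji\bar l}+R_{i\bar lk\bar j}+R_{k\bar li\bar j}=2c\,(\delta_{ij}\delta_{kl}+\delta_{il}\delta_{kj})
\]
for all $i,j,k,l$; in particular $R_{i\bar ii\bar i}=c$ on the diagonal and $R_{i\bar ji\bar j}=0$ off it. These constitute the symmetrized part of the curvature tensor, which, as noted in the introduction, is all that $H$ controls.

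The third step is to show $c=0$. Here I would contract the polarized relations against the metric: the resulting scalar-curvature-type quantities equal a fixed multiple of $c$ on one side, while on the other side they are a quadratic combination of the structure constants, in which the trace-free condition cancels the indefinite cross terms and leaves a sign-definite sum of $|\,\cdot\,|^2$; a complementary contraction forces the opposite sign, pinning $c=0$. With $c=0$ the symmetrized curvature vanishes identically, and it remains to upgrade this to $R\equiv0$ (resp.\ $R^r\equiv0$). I would feed the vanishing of all diagonal and mixed symmetrizations back into the curvature formula and, using the trace-free condition and the integrability constraints once more, deduce that each individual component vanishes; concretely this should force the defining operator into the special algebraic type characterizing flat metrics — complex-linear $\operatorname{ad}$ in the Chern case, so that $G$ is a complex Lie group by Boothby's theorem, and skew-adjoint $\operatorname{ad}$ in the Levi-Civita case, in accordance with Milnor's criterion.

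I expect the Levi-Civita case, and within it the codimension-$2$ case (ii), to be the principal obstacle. For the Chern connection the curvature formula is short and its symmetrization collapses to a few terms, but the Levi-Civita torsion-difference tensor injects cross terms with no manifest sign, so the contraction argument that yields $c=0$ must be arranged delicately. Moreover, case (ii) contributes two independent complementary operators together with the bracket $[e_n,\bar e_n]$, so the polarized identity has genuinely more coefficients to balance; carrying the integrability constraints through while isolating the vanishing of each curvature component is where the bookkeeping is heaviest, and it is the step most likely to require a clever combination of the equations rather than a routine calculation.
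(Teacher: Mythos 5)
Your skeleton coincides with the paper's: admissible unitary frames in which all structure constants are packed into $(\lambda,v,A)$ in case (i) and $(\lambda,v,X,Y,Z)$ in case (ii), the closed-form left-invariant Chern curvature, the torsion correction relating $R$ and $R^r$, and the polarized identity $\widehat{R}_{i\bar{j}k\bar{\ell}}=\frac{c}{2}(\delta_{ij}\delta_{k\ell}+\delta_{i\ell}\delta_{kj})$. But the two places where you defer the work are exactly where the proof lives, and as described your argument would not close. Pinning $c=0$ in the Levi-Civita case is not achieved by ``contracting and letting the trace-free condition cancel the cross terms.'' To begin with, the constraints you allow yourself --- integrability of $J$ plus unimodularity --- are insufficient: in case (ii) the Jacobi identity imposes the relations \eqref{XYZ}, and its trace, $\mathrm{tr}(Z\overline{Z})=\lambda\,\mathrm{tr}(Y+Y^{*})+\lambda^{2}$ after using unimodularity (this is \eqref{traceL}), is indispensable for eliminating the indefinite term $-\lambda\,\mathrm{tr}(Y+Y^{*})$ in $\sum_i\widehat{R}^r_{1\bar{1}i\bar{i}}$; your outline never invokes Jacobi. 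Even with it, no single contraction decides the sign: the paper must play two contractions against each other --- over pairs $(i,k)$ with $2\le i,k\le n$, and over pairs $(1,i)$ --- to obtain $2c(n-1)(2-n)=|v|^{2}+3\lambda^{2}-3|{}^t\!Z+Z|^{2}$, bound the right-hand side above by $-\frac{3}{2}c$, and conclude $c=0$ only because $(n-1)(n-2)\ge 2$ when $n\ge 3$; the case $n=2$ then needs a separate elimination argument. So the codimension-$2$ Levi-Civita case, which you correctly single out as the principal obstacle, is precisely the part your proposal leaves unproved, and the generic contraction scheme you describe does not reduce it to routine bookkeeping.

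Your endgame for the Chern case also rests on a false characterization. Vanishing Chern curvature does not force $\mathrm{ad}$ to be complex-linear, and $G$ need not be a complex Lie group --- the paper stresses this explicitly: for instance $\lambda=0$, $v=0$, $A$ normal but not skew-Hermitian gives a Chern-flat, non-K\"ahler Hermitian Lie algebra on which $J$ is not bi-invariant (Boothby's theorem only says the compact quotient is \emph{also} a quotient of some other, complex, Lie group). The correct algebraic targets, from Lemma \ref{lemma5} and Lemma \ref{lemma-codim2}, are the normality-type conditions $[A,A^{*}]=0$, resp.\ $Z=0$, $[Y,Y^{*}]=[Y,X^{*}]=0$, and these must be extracted from $\widehat{R}\equiv 0$ together with the Jacobi identity; aiming instead at complex-linearity of $\mathrm{ad}$ would send you after a statement that is simply false. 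Relatedly, unimodularity is not ``the decisive arithmetic input'' in both cases: in the Chern case it is not needed at all (in an admissible frame the diagonal curvatures already force $c=0$, being $\le 0$ in the transverse direction and $\ge 0$, indeed $0$ in case (i), in the ideal directions), whereas in the Levi-Civita case it is essential and cannot be avoided --- the paper exhibits a non-unimodular almost abelian algebra with $H^{r}\equiv -2$ that is not K\"ahler, so any Levi-Civita argument that does not genuinely use the trace condition must fail.
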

In other words, Conjecture \ref{conj2} holds if ${\mathfrak g}$ is either almost abelian or it contains a $J$-invariant abelian ideal of codimension $2$. As we shall see in the proofs, in the Chern connection case, one does not need to assume that ${\mathfrak g}$ is unimodular, namely, $H=c$ implies that $c=0$ and $R=0$. On the other hand, in the Levi-Civita connection case, one does need to assume that ${\mathfrak g}$ is unimodular. In this case $H^r=c$ leads to $c=0$, $R^r=0$, and $g$ is in fact K\"ahler. The conclusion fails when ${\mathfrak g}$ is not unimodular and $c<0$, namely, there are examples of (non-unimodular) Hermitian Lie algebra which is almost abelian and has constant negative Levi-Civita holomorphic sectional curvature, yet the metric is not K\"ahler. We shall see an explicit example of such kind in \S 3.

The article is organized as follows. In the next section, we will set up the notations, collect some known results from existing literature, and prove some preliminary lemmas. In the sections $3$ and $4$ we will give proofs to Theorem \ref{thm} in the almost abelian case and in the case when ${\mathfrak g}$ contains a $J$-invariant abelian ideal of codimension $2$.

\vspace{0.3cm}

\section{Preliminaries }

Let $(M^n,g)$ be a Hermitian manifold. Denote by $\nabla$ the Chern connection and by $T$, $R$ its torsion and curvature tensor,  given respectively by
$$ T(x,y) = \nabla_xy-\nabla_yx-[x,y], \ \ \ \ R(x,y,z,w)=\langle \nabla_x\nabla_yz-\nabla_y\nabla_xz -\nabla_{[x,y]}z, w\rangle ,$$
where $x$, $y$, $z$, $w$ are vector fields on $M^n$ and $g=\langle , \rangle $ is the metric. In terms of type $(1,0)$ tangent vectors, namely, those in the form  $X=x-\sqrt{-1}Jx$ where $x$ is any real tangent vector, the holomorphic sectional curvature $H$ of $R$ would be a constant $c$ if and only if
$$ R_{X\overline{X}X\overline{X}} = c |X|^4  $$
for any type $(1,0)$ tangent vector $X$. Under a local unitary frame $e=\{ e_1, \ldots , e_n\}$, we have
\begin{equation}
H=c \ \Longleftrightarrow \ \widehat{R}_{i\overline{j}k\overline{\ell}}=\frac{c}{2}(\delta_{ij}\delta_{k\ell} + \delta_{i\ell}\delta_{kj}), \label{RhatC}
\end{equation}
where
\begin{equation} \label{Rhat}
\widehat{R}_{i\overline{j}k\overline{\ell}} = \frac{1}{4} \big( R_{i\overline{j}k\overline{\ell}} + R_{k\overline{j}i\overline{\ell}} +  R_{i\overline{\ell}k\overline{j}} +  R_{k\overline{\ell}i\overline{j}} \big) 
\end{equation}
is the symmetrization of $R$. When $g$ is K\"ahler, the well-known K\"ahler symmetry says that $\widehat{R}=R$, and $H$ determines the entire $R$, but for general Hermitian metrics, $H$ can only determine $\widehat{R}$ but not $R$.

Similarly, let us denote by $\nabla^r$ the Levi-Civita connection of $g$ and by $R^r$ the curvature tensor. Denote by $H^r$ its holomorphic sectional curvature, then we have
\begin{equation}
H^r=c \ \Longleftrightarrow \ \widehat{R}^r_{i\overline{j}k\overline{\ell}}=\frac{c}{2}(\delta_{ij}\delta_{k\ell} + \delta_{i\ell}\delta_{kj}). \label{RrhatC}
\end{equation}
Under any unitary frame $e$, write $T(e_i,e_k)=\sum_{j=1}^n T^j_{ik}e_j$, $1\leq i,k\leq n$ for the  Chern torsion components.  The curvature tensors $R$ and $R^r$ are related by the following formula from \cite[Lemma 7]{YZ} (note that in that paper, $R$ and $R^r$ are denoted by $R^h$ and $R$, respectively, while $T^j_{ik}$ is denoted as $2T^j_{ik}$): 

\begin{lemma}[\cite{YZ}] \label{lemma1}
Let $(M^n,g)$ be a Hermitian manifold and $e$ be a local unitary frame. The Chern and Levi-Civita curvature components under $e$ are related by
\begin{eqnarray}
T^{\ell}_{ik,\bar{j}} & = & R_{k\bar{j}i\bar{\ell}} -  R_{i\bar{j}k\bar{\ell}}, \nonumber \\
R^r_{ijk\bar{\ell}} & = &  \frac{1}{2} T^{\ell}_{ij,k} + \frac{1}{4}\big( T^s_{jk}R^{\ell}_{si} -  T^s_{ik}R^{\ell}_{sj} \big) , \nonumber \\
R^r_{i\bar{j}k\bar{\ell}} &= &   R_{i\bar{j}k\bar{\ell}}  + \frac{1}{2}\big( T^{\ell}_{ik,\bar{j}}  + \overline{  T^k_{j\ell , \bar{i}} } \big) + \frac{1}{4}\big( T^s_{ik} \overline{T^s_{j\ell } } -   T^{\ell}_{is} \overline{T^k_{js} } -  T^{j}_{ks} \overline{T^i_{\ell s} }   \big) , \label{eq:4}\\
R^r_{ik\bar{j}\bar{\ell}} & = &  \frac{1}{2}  \big(    T^{\ell}_{ik,\bar{j}} -  T^{j}_{ik,\bar{\ell}}   \big) + \frac{1}{4}\big( 2T^s_{ik} \overline{T^s_{j\ell } } +  T^{j}_{is} \overline{T^k_{\ell s} } +  T^{\ell}_{ks} \overline{T^i_{js} } -   T^{\ell}_{is} \overline{T^k_{js} } -  T^{j}_{ks} \overline{T^i_{\ell s} }   \big) , \nonumber
\end{eqnarray} 
where $s$ is summed up from $1$ to $n$ and indices after comma stand for covariant derivatives with respect to the Chern connection $\nabla$.
\end{lemma}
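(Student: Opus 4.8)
The plan is to treat $R^r$ as a perturbation of the Chern curvature $R$ governed by the difference tensor $\gamma=\nabla^r-\nabla$, and to derive the first (purely Chern) identity separately from the Bianchi identity of $\nabla$. Since both $\nabla$ and $\nabla^r$ are metric, the endomorphism $\gamma_x:=\nabla^r_x-\nabla_x$ is skew with respect to $g$, i.e. $\langle \gamma_x y,z\rangle+\langle y,\gamma_x z\rangle=0$; and since $\nabla^r$ is torsion-free while $\nabla$ has torsion $T$, one has $\gamma_x y-\gamma_y x=-T(x,y)$. These two constraints determine $\gamma$ through a Koszul-type identity
\[
2\langle \gamma_x y,z\rangle=\langle T(y,z),x\rangle-\langle T(z,x),y\rangle-\langle T(x,y),z\rangle,
\]
where $\langle\,,\rangle$ denotes the $\C$-bilinear extension of $g$. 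First I would feed in the pairs $(e_i,e_k)$, $(e_i,\overline{e_k})$, $(\overline{e_i},\overline{e_k})$ and use that the Chern torsion is of pure type $(2,0)$, so that $T(e_i,\overline{e_k})=0$; this records the components of $\gamma$ explicitly in terms of $T^j_{ik}$ and their conjugates (for example $\gamma_{e_i}e_k=-\tfrac12 T(e_i,e_k)$).

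Next I would invoke the general comparison formula for two connections differing by a tensor: for $\nabla^r=\nabla+\gamma$,
\[
R^r(x,y)z=R(x,y)z+(\nabla_x\gamma)_y z-(\nabla_y\gamma)_x z+[\gamma_x,\gamma_y]z+\gamma_{T(x,y)}z,
\]
where the last term is present precisely because the reference connection $\nabla$ carries torsion. Pairing the result with $\overline{e_\ell}$ (or $e_\ell$) and choosing the appropriate triple $(x,y,z)$ then reads off each stated component; for instance $(x,y,z)=(e_i,\overline{e_j},e_k)$ gives $R^r_{i\overline j k\overline\ell}$, $(e_i,e_j,e_k)$ gives $R^r_{ijk\overline\ell}$, and $(e_i,e_k,\overline{e_j})$ gives $R^r_{ik\overline j\overline\ell}$. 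The $(\nabla\gamma)$-terms generate the first covariant derivatives of torsion ($T^\ell_{ik,\overline j}$, $T^\ell_{ij,k}$, $\ldots$), while $[\gamma_x,\gamma_y]$ and $\gamma_{T(x,y)}$ generate the quadratic torsion expressions; throughout I would use $\nabla g=0$ to commute the metric past covariant derivatives and keep the index types separated.

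The first identity involves no $R^r$ and is internal to $\nabla$; I would obtain it from the first Bianchi identity of the Chern connection, $\mathfrak S_{x,y,z}R(x,y)z=\mathfrak S_{x,y,z}\big[(\nabla_x T)(y,z)+T(T(x,y),z)\big]$ (with $\mathfrak S$ the cyclic sum), evaluated on the triple $(e_i,e_k,\overline{e_j})$ and paired with $\overline{e_\ell}$. Since the Chern curvature has vanishing $(2,0)$-part, $R(e_i,e_k)=0$; since $T$ is of type $(2,0)$, every term containing $T(e_\ast,\overline{e_\ast})$ drops; and what remains is $R_{k\overline j i\overline\ell}-R_{i\overline j k\overline\ell}$ on the left and the $e_\ell$-component of $(\nabla_{\overline{e_j}}T)(e_i,e_k)$, namely $T^\ell_{ik,\overline j}$, on the right. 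This identity is then substituted to simplify the $(\nabla\gamma)$-contributions in the previous step.

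The conceptual input is light, so I expect the decisive difficulty to be organizational rather than structural: carefully tracking which indices are barred, conjugating correctly, and checking that the many quadratic torsion terms arising from $[\gamma_x,\gamma_y]$ and $\gamma_{T(x,y)}$ collect into exactly the stated $\tfrac14$-combinations. The real risk is sign and index bookkeeping when projecting the tensorial identities onto the unitary frame and resolving the Levi-Civita curvature into its several complex types.
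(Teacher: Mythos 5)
Your proposal is sound, and all of its load-bearing assertions check out. One remark first: the paper does not actually prove this lemma --- it is quoted verbatim from \cite{YZ} (Lemma 7 there), so the real comparison is with that source, where the same result is obtained in moving-frames language by writing the Levi-Civita connection forms as the Chern connection forms plus torsion corrections and expanding the curvature $2$-forms via the structure equations. Your invariant formulation is the same computation in different clothing, and the three ingredients you isolate are all correct: (a) with $\gamma_x$ skew-adjoint and $\gamma_x y-\gamma_y x=-T(x,y)$, the cyclic Koszul trick does give $2\langle \gamma_x y,z\rangle=\langle T(y,z),x\rangle-\langle T(z,x),y\rangle-\langle T(x,y),z\rangle$, whence $\gamma_{e_i}e_k=-\tfrac12 T(e_i,e_k)$ and $\gamma_{\bar e_j}e_k=\tfrac12\sum_s\big(\overline{T^k_{js}}\,e_s+T^j_{ks}\,\bar e_s\big)$; (b) your curvature-comparison formula, including the easily forgotten term $\gamma_{T(x,y)}z$ that arises because the reference connection $\nabla$ has torsion, is exactly right; (c) the first identity does follow from the torsionful first Bianchi identity evaluated on $(e_i,e_k,\bar e_j)$, since the Chern curvature has no $(2,0)$-part and $T$ has pure type $(2,0)$, which kills every term except $(\nabla_{\bar e_j}T)(e_i,e_k)$ on the right and $R(e_k,\bar e_j)e_i-R(e_i,\bar e_j)e_k$ on the left. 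As a spot check of the bookkeeping you deferred: for $(x,y,z)=(e_i,\bar e_j,e_k)$ the term $\gamma_{T(e_i,\bar e_j)}e_k$ vanishes, $-(\nabla_{\bar e_j}\gamma)_{e_i}e_k$ contributes $\tfrac12 T^{\ell}_{ik,\bar j}$, $(\nabla_{e_i}\gamma)_{\bar e_j}e_k$ contributes $\tfrac12\overline{T^k_{j\ell,\bar i}}$, and the $e_\ell$-component of $[\gamma_{e_i},\gamma_{\bar e_j}]e_k$ is $\tfrac14\sum_s\big(T^s_{ik}\overline{T^s_{j\ell}}-T^{\ell}_{is}\overline{T^k_{js}}-T^j_{ks}\overline{T^i_{\ell s}}\big)$, which reproduces the third identity of the lemma exactly; the remaining two components come out the same way. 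So the plan closes, and the only caveat is the one you already flagged: conventions (here $T^j_{ik}$ is twice the $T^j_{ik}$ of \cite{YZ}) must be fixed once and tracked consistently, or the quadratic coefficients will come out off by factors of $2$.
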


By (\ref{Rhat}) and (\ref{eq:4}), we immediately get the following:
\begin{lemma} \label{lemma2}
Let $(M^n,g)$ be a Hermitian manifold and $e$ be a local unitary frame. The symmetrization of Levi-Civita curvature has components
\begin{equation*}
\widehat{R}^r_{i\bar{j}k\bar{\ell}} = \widehat{R}_{i\bar{j}k\bar{\ell}} - \frac{1}{8} \sum_{s=1}^n \big( T^{j}_{is} \overline{T^k_{\ell s} } +  T^{\ell}_{ks} \overline{T^i_{js} } +  T^{\ell}_{is} \overline{T^k_{js} } + T^{j}_{ks} \overline{T^i_{\ell s} } \big) , \ \ \ \forall \ 1\leq i,j,k,\ell \leq n. \label{eq:8}
\end{equation*}
\end{lemma}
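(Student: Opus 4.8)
The plan is to obtain the formula by a direct computation: apply the symmetrization operator defined in (\ref{Rhat}) to the third line of (\ref{eq:4}). Concretely, since the same symmetrization applies verbatim to $R^r$, I would write
$$\widehat{R}^r_{i\bar{j}k\bar{\ell}} = \frac{1}{4}\big(R^r_{i\bar{j}k\bar{\ell}} + R^r_{k\bar{j}i\bar{\ell}} + R^r_{i\bar{\ell}k\bar{j}} + R^r_{k\bar{\ell}i\bar{j}}\big),$$
and substitute into each of the four terms the expression for $R^r_{\cdot\bar{\cdot}\cdot\bar{\cdot}}$ furnished by (\ref{eq:4}). The $R$-part on the right of (\ref{eq:4}) reproduces $\widehat{R}_{i\bar{j}k\bar{\ell}}$ by the very definition of the symmetrization, so the whole task reduces to tracking the torsion contributions.

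First I would dispose of the first-order terms $\tfrac{1}{2}(T^{\ell}_{ik,\bar{j}} + \overline{T^k_{j\ell,\bar{i}}})$. Under the four index permutations entering the average, the term $T^{\ell}_{ik,\bar{j}}$ is paired with its image under the swap $i\leftrightarrow k$, while the conjugate term $\overline{T^k_{j\ell,\bar{i}}}$ is paired with its image under the swap $j\leftrightarrow\ell$ (acting on its two lower indices). Because the Chern torsion is antisymmetric in its two lower indices, $T^{\ell}_{ik}=-T^{\ell}_{ki}$, each such pair cancels, so no covariant-derivative terms survive. The same antisymmetry annihilates the quadratic term $\tfrac{1}{4}T^s_{ik}\overline{T^s_{j\ell}}$, since symmetrizing over $i\leftrightarrow k$ produces $T^s_{ik}+T^s_{ki}=0$.

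The only surviving contributions therefore come from the two quadratic terms $-\tfrac{1}{4}\big(T^{\ell}_{is}\overline{T^k_{js}} + T^{j}_{ks}\overline{T^i_{\ell s}}\big)$. Expanding each over the four permutations, one checks that the symmetrization of $T^{\ell}_{is}\overline{T^k_{js}}$ generates exactly the four monomials $T^{\ell}_{is}\overline{T^k_{js}}$, $T^{\ell}_{ks}\overline{T^i_{js}}$, $T^{j}_{is}\overline{T^k_{\ell s}}$, $T^{j}_{ks}\overline{T^i_{\ell s}}$, and the symmetrization of $T^{j}_{ks}\overline{T^i_{\ell s}}$ generates precisely the same four monomials. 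Hence each appears with multiplicity $2$; combining the factor $\tfrac{1}{4}$ from (\ref{eq:4}) with the factor $\tfrac{1}{4}$ from the symmetrization average yields the asserted coefficient $-\tfrac{1}{8}$, giving the claimed identity.

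Since the argument is pure bookkeeping driven by a single structural fact (antisymmetry of $T$ in its lower indices), there is no genuine obstacle. The only point demanding care is the correct tracking of the index swaps inside the conjugated torsion factors, and in particular the verification that the two surviving quadratic terms generate one and the same set of four monomials rather than eight distinct ones; this coincidence is what collapses the expression into the compact symmetric form stated in the lemma.
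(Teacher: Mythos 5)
Your proposal is correct and coincides with the paper's own (implicit) proof: the paper derives Lemma \ref{lemma2} exactly by applying the symmetrization (\ref{Rhat}) to the third identity of Lemma \ref{lemma1}, just as you do. Your bookkeeping is accurate --- the derivative terms and the $T^s_{ik}\overline{T^s_{j\ell}}$ term cancel by antisymmetry of $T$ in its lower indices, and the two remaining quadratic terms each symmetrize to the same four monomials, producing the coefficient $-\tfrac{1}{8}$.
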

In particular, we have
\begin{equation}
\left\{ \begin{split}
R^r_{i\bar{i}i\bar{i}} = R_{i\bar{i}i\bar{i}} - \frac{1}{2}\sum_{s=1}^n |T^i_{is}|^2, \ \ \  \ \ \  \ \ \ \ \forall \ 1\leq i\leq n.  \hspace{4cm} \\
\widehat{R}^r_{i\bar{i}k\bar{k}} = \widehat{R}_{i\bar{i}k\bar{k}} - \frac{1}{8}\sum_{s=1}^n \big( |T^i_{ks}|^2 + |T^k_{is}|^2 + 2\mbox{Re} \{ T^i_{is}\overline{T^k_{ks}} \} \big), \ \ \ \ \  \forall \ 1\leq i,k\leq n. 
\end{split}    \right.  \label{eq:iikk}
\end{equation}

As an immediate corollary, we have the following
\begin{proposition} \label{prop0}
Let $(M^n,g)$ be a Hermitian manifold. If its Chern and Levi-Civita holomorphic sectional curvatures are equal: $H=H^r$, then $g$ is K\"ahler.
\end{proposition}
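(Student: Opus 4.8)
The plan is to translate the hypothesis $H=H^r$ into an identity on curvature tensors and then read off the vanishing of the Chern torsion, since a Hermitian metric is K\"ahler precisely when its Chern connection is torsion-free. First I would observe that $H=H^r$ is an equality of functions on nonzero $(1,0)$-vectors, so clearing the common denominator gives $R_{X\overline{X}X\overline{X}}=R^r_{X\overline{X}X\overline{X}}$ for every type $(1,0)$ tangent vector $X$. As stressed after (\ref{Rhat}), the holomorphic sectional curvature only sees the symmetrized tensor: writing $X=\sum_a x_a e_a$ and contracting, the monomial $x_a x_c$ (resp. $\overline{x_b}\,\overline{x_d}$) is symmetric in $a,c$ (resp. $b,d$), so $R_{X\overline{X}X\overline{X}}=\sum x_a\overline{x_b}x_c\overline{x_d}\,\widehat{R}_{a\overline{b}c\overline{d}}$ and likewise for $R^r$. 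Two Hermitian quartic forms that agree on all $X$ have the same coefficients by polarization, whence $\widehat{R}=\widehat{R}^r$ as symmetric tensors; in particular $\widehat{R}_{i\overline{i}k\overline{k}}=\widehat{R}^r_{i\overline{i}k\overline{k}}$ for all $i,k$.

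Next I would feed this into (\ref{eq:iikk}). The first identity there, $R^r_{i\overline{i}i\overline{i}}=R_{i\overline{i}i\overline{i}}-\tfrac12\sum_s|T^i_{is}|^2$, is nothing but the diagonal case $H^r(e_i)=H(e_i)$ (no symmetrization is needed when all indices coincide), and it immediately forces $T^i_{is}=0$ for all $i,s$. With these trace-type components gone, the cross term $2\,\mathrm{Re}\{T^i_{is}\overline{T^k_{ks}}\}$ in the second identity of (\ref{eq:iikk}) drops out, so $\widehat{R}_{i\overline{i}k\overline{k}}=\widehat{R}^r_{i\overline{i}k\overline{k}}$ leaves $\sum_s\big(|T^i_{ks}|^2+|T^k_{is}|^2\big)=0$ for every pair $i,k$. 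Both summands being nonnegative, each must vanish, giving $T^i_{ks}=0$ for all indices $i,k,s$; that is, the entire Chern torsion $T$ vanishes.

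Finally I would invoke the standard fact that the Chern connection of a Hermitian metric is torsion-free if and only if the metric is K\"ahler, concluding that $g$ is K\"ahler. The only genuinely delicate point is the polarization step, namely upgrading equality of the two holomorphic sectional curvatures to equality of the two symmetrized curvature tensors; everything afterward is a direct inspection of the signs in (\ref{eq:iikk}), where the torsion enters in a manifestly nonnegative combination. One could sidestep the general polarization and argue using only the two displayed scalar identities in (\ref{eq:iikk})---the first handling the diagonal components through $H(e_i)=R_{i\overline{i}i\overline{i}}$ and the second the mixed $i\overline{i}k\overline{k}$ entries---but the polarization formulation is cleaner and makes transparent why comparing only those entries already suffices.
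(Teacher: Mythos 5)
Your proof is correct, and it differs from the paper's in a way worth noting. Both arguments hinge on the same key fact---Lemma \ref{lemma2} and its consequence (\ref{eq:iikk}), where the torsion enters through manifestly nonnegative terms---but the order and degree of polarization are different. The paper polarizes \emph{late} and at \emph{degree two}: it uses only the first (diagonal) identity of (\ref{eq:iikk}), but applied to an arbitrary unit $(1,0)$-vector $X$ (completing $X$ to a unitary frame), which gives $\sum_s |T^X_{Xs}|^2=0$, i.e.\ the sesquilinear forms $\sum_{i,j}X_i\overline{X}_jT^j_{is}$ vanish identically in $X$; since a sesquilinear form vanishing on the diagonal is zero, all $T^j_{is}$ vanish at once, and the second identity of (\ref{eq:iikk}) is never needed. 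You instead polarize \emph{early} and at \emph{degree four}: you upgrade $H=H^r$ to the full tensor identity $\widehat{R}=\widehat{R}^r$, then work in one fixed frame, using the first identity of (\ref{eq:iikk}) to kill the trace components $T^i_{is}$ and the second identity (with the cross terms now gone) to kill everything else. Both polarization facts are standard---yours is exactly the one the paper invokes implicitly in (\ref{RhatC}) and (\ref{RrhatC}), so it is certainly available---but the paper's route is slightly more economical, needing only the diagonal curvature identity and the more elementary quadratic polarization. What your route buys is the explicit intermediate statement $\widehat{R}=\widehat{R}^r$, which is conceptually clean and makes transparent exactly which part of the curvature the hypothesis controls; also, your closing remark is apt that one could restrict to the two displayed scalar identities in (\ref{eq:iikk}) without the general quartic polarization, provided one keeps the second identity in play.
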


\begin{proof}
The assumption says that $R^r_{X\overline{X}X\overline{X}}= R_{X\overline{X}X\overline{X}}$ for any type $(1,0)$ tangent vector $X$. By the first line of (\ref{eq:iikk}), we have $T^X_{Xs}=0$ for any $1\leq s\leq n$ and any $X=\sum_iX_ie_i$. This means
$$ \sum_{i,j=1}^n X_i \overline{X}_j T^j_{is}=0,\ \ \ \ \forall \ 1\leq s\leq n, $$
where these $X_i$ are arbitrary. Therefore $T^j_{ik}=0$ for any $1\leq i,j,k\leq n$, hence $T=0$ and $g$ is K\"ahler. 
\end{proof}

Next, let us recall some basic properties for Lie-Hermitian manifolds. Let $G$ be a connected, simply-connected, even-dimensional Lie group, and ${\mathfrak g}$ its Lie algebra. Left-invariant complex structures and  compatible left-invariant metrics  on $G$ correspond to almost complex structures $J$ and compatible inner products $g=\langle \,,\,\rangle$  on ${\mathfrak g}$, such that $J$ is integrable, namely,
\begin{equation*}
[x,y]- [Jx,Jy] +J [Jx, y] + J[x,Jy] =0, \ \ \ \ \ \forall \ x,y\in {\mathfrak g}.
\end{equation*}
Extend $J$ and $\langle \,,\,\rangle$ linearly over ${\mathbb C}$ to the complexification ${\mathfrak g}^{\mathbb C}$. We have decomposition ${\mathfrak g}^{\mathbb C} = {\mathfrak g}^{1,0} \oplus {\mathfrak g}^{0,1}$ into the $(1,0)$ and $(0,1)$ parts, where ${\mathfrak g}^{1,0}= \{ x-\sqrt{-1}Jx \mid x\in {\mathfrak g}\}$ and ${\mathfrak g}^{0,1} = \overline{{\mathfrak g}^{1,0}}$.
Write $\dim_{\mathbb R}{\mathfrak g} =2n$. A unitary basis $e=\{ e_1, \ldots , e_n\}$ of ${\mathfrak g}^{1,0}$ will be called a {\em unitary frame} of ${\mathfrak g}$ from now on. Following the notations of \cite{VYZ} (see also \cite{YZ-Gflat} and \cite{ZZJGP}), let us denote by
\begin{equation*}
C_{ik}^j = \langle [e_i,e_k] , \,\overline{e}_j \rangle , \ \ \ \ \ D_{ik}^j = \langle  [\overline{e}_j, e_k] , e_i \rangle, \ \ \ \ \ 1\leq i,j,k\leq n,
\end{equation*}
for the structure constants. By the integrability of $J$ we have
\begin{equation}
[e_i,e_j] = \sum_{k=1}^n C_{ij}^k e_k , \ \ \ \ \ [e_i,\overline{e}_j ] = \sum_{k=1}^n \big( \overline{D^i_{kj}} \,e_k -  D_{ki}^j \overline{e}_k \big).   \label{E2CD}
\end{equation}
We can extend each $e_i$  to left-invariant vector fields on $G$, which will still be denoted as $e_i$. So $e$ becomes a global unitary frame on $G$ as a Hermitian manifold. This will be our frame of choice from now on. Under this frame, the Chern connection $\nabla$ has expression
\begin{equation*}
\nabla e_i  = \sum_{j=1}^n \theta_{ij}e_j = \sum_{j=1}^n  \sum_{k=1}^n \big( D^j_{ik}\varphi_k  -\overline{D^i_{jk}} \overline{\varphi_k} \big) \,e_j \label{theta}
\end{equation*}
where $\varphi$ is the coframe of $(1,0)$-forms dual to $e$. As is well-known, the Chern torsion $T$ satisfies $T(X,\overline{Y})=0$ for any type $(1,0)$ tangent vectors $X$ and $Y$. Write the components of $T$ under the frame $e$ as $T^j_{ik}$, namely, $\, T(e_i,e_k)   = \sum_{j=1}^n  \,T_{ik}^j e_j$. Note that our $T^j_{ik}$ here is twice of the same notation in \cite{YZ} or \cite{VYZ}. We have (see \cite{VYZ} for instance, and notice the difference caused by the factor $2$)
\begin{equation}
T_{ik}^j = - C_{ik}^j - D_{ik}^j + D_{ki}^j.   \label{T}
\end{equation}
The covariant derivative of $T$ with respect to the Chern connection $\nabla$ are given by
\begin{equation*}
\left\{   \begin{split} T^j_{ik,\,\ell} = \sum_{s=1}^n \big( -T^j_{sk} D^s_{i\ell} - T^j_{is} D^s_{k\ell} + T^s_{ik} D^j_{s\ell } \big),   \\
T^j_{ik,\,\overline{\ell}} = \sum_{s=1}^n \big( T^j_{sk} \overline{D^i_{s\ell}} + T^j_{is} \overline{D^k_{s\ell} } - T^s_{ik} \overline{ D^s_{j\ell } }\big), \ \ \ \end{split}  \right. \label{Tderi}
\end{equation*}
for any $i$, $j$, $k$, $\ell$, where the indices after comma stand for covariant derivatives with respect to the Chern connection $\nabla$. Now the structure equation takes form:
\begin{equation}
d\varphi_{i} = -\frac{1}{2} \sum_{j,k}  C^i_{jk}\varphi_j \wedge \varphi_k - \sum_{j,k} \overline{D^j_{ik}} \varphi_{j} \wedge \overline{\varphi}_{k}, \ \ \ \ \ \ \forall \ 1\leq i\leq n.  \label{SED}
\end{equation}
By \cite[Lemma 2.1]{VYZ}, the structure constants $C$ and $D$ satisfy the first Bianchi identity, which in this case is also equivalent to the Jacobi identity:
\begin{equation}
\left\{   \begin{split}
 \ \sum_{s=1}^n \big( C^s_{ij}C^{\ell}_{sk} + C^s_{jk}C^{\ell}_{si} + C^s_{ki}C^{\ell}_{sj} \big) \ = \ 0, \hspace{3.2cm} \\
 \ \sum_{s=1}^n \big( C^s_{ik}D^{\ell}_{js} + D^s_{ji}D^{\ell}_{sk} - D^s_{jk}D^{\ell}_{si} \big) \ = \ 0 , \hspace{3cm}   \\
 \ \sum_{s=1}^n \big( C^s_{ik}\overline{D^{s}_{j\ell }} - C^j_{sk}\overline{D^{i}_{s\ell }} + C^j_{si}\overline{D^{k}_{s\ell }}  - D^{\ell}_{si}\overline{D^{k}_{js }} + D^{\ell}_{sk} \overline{ D^{i}_{js }}  \big) \ = \ 0,  
\end{split}  \right. \label{CCCD}
\end{equation}
for any $1\leq i,j,k,\ell \leq n$. For Lie-Hermitian manifolds, the components of the curvature tensor $R$ of $\nabla$ take a particularly simple form:


\begin{lemma} [\cite{LZ}] \label{lemma3}
Under any unitary basis $e$ of ${\mathfrak g}$, the components of Chern curvature tensor $R$ of a Lie-Hermitian manifold are given by
\begin{equation*}
R_{i\overline{j}k\overline{\ell}} = \sum_{s=1}^n \big( D^s_{ki} \overline{ D^s_{\ell j} } - D^{\ell}_{si} \overline{ D^k_{sj } }  - D^j_{si} \overline{ D^k_{\ell s} } -  \overline{ D^i_{sj} } D^{\ell}_{ks}  \big) \label{R},  \ \ \ \ \ \ \forall \ 1\leq i,j,k,\ell \leq n.
\end{equation*}
\end{lemma}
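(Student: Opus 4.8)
The plan is to compute $R_{i\overline{j}k\overline{\ell}}=R(e_i,\overline{e}_j,e_k,\overline{e}_\ell)$ directly from the definition of the Chern curvature, substituting the explicit connection coefficients read off from the displayed formula for $\nabla e_i$. Evaluating the connection $1$-form $\theta_{ij}=\sum_k\big(D^j_{ik}\varphi_k-\overline{D^i_{jk}}\,\overline{\varphi_k}\big)$ on the frame vectors, and using $\varphi_m(e_k)=\delta_{mk}$, $\varphi_m(\overline{e}_k)=0$, $\overline{\varphi_m}(\overline{e}_k)=\delta_{mk}$, gives the constant coefficients
$$
\nabla_{e_k}e_i=\sum_{j}D^j_{ik}\,e_j,\qquad \nabla_{\overline{e}_k}e_i=-\sum_{j}\overline{D^i_{jk}}\,e_j.
$$
The key simplification special to the Lie-Hermitian setting is that the $e_i$ are left-invariant, so these coefficients are genuine constants and no differentiation of structure constants ever appears; the curvature is therefore purely algebraic in the $D$'s. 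Using the bilinear extension of the metric, under which $\langle e_p,\overline{e}_\ell\rangle=\delta_{p\ell}$, the three pieces of
$$
R_{i\overline{j}k\overline{\ell}}=\big\langle \nabla_{e_i}\nabla_{\overline{e}_j}e_k-\nabla_{\overline{e}_j}\nabla_{e_i}e_k-\nabla_{[e_i,\overline{e}_j]}e_k,\ \overline{e}_\ell\big\rangle
$$
can be expanded one at a time.

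First I would handle $\nabla_{e_i}\nabla_{\overline{e}_j}e_k$: applying $\nabla_{\overline{e}_j}e_k=-\sum_s\overline{D^k_{sj}}\,e_s$ and then $\nabla_{e_i}e_s=\sum_p D^p_{si}\,e_p$, and pairing with $\overline{e}_\ell$, produces $-\sum_s D^\ell_{si}\overline{D^k_{sj}}$. The second piece $-\nabla_{\overline{e}_j}\nabla_{e_i}e_k$ is computed the same way in the opposite order and yields $+\sum_s D^s_{ki}\overline{D^s_{\ell j}}$. The third piece requires the full bracket relation (\ref{E2CD}), namely $[e_i,\overline{e}_j]=\sum_m\big(\overline{D^i_{mj}}\,e_m-D^j_{mi}\,\overline{e}_m\big)$, which has both a $(1,0)$ and a $(0,1)$ part. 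Feeding the $(1,0)$ part into $\nabla_{e_m}e_k$ and the $(0,1)$ part into $\nabla_{\overline{e}_m}e_k$, then pairing with $\overline{e}_\ell$, and finally taking the overall minus sign into account, gives $-\sum_s\overline{D^i_{sj}}D^\ell_{ks}-\sum_s D^j_{si}\overline{D^k_{\ell s}}$. Adding the four resulting sums reproduces exactly the stated formula.

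The computation is entirely routine, so there is no real obstacle beyond index bookkeeping; the one place to be careful is the third piece, where one must \emph{not} drop the $(0,1)$ component $-\sum_m D^j_{mi}\overline{e}_m$ of $[e_i,\overline{e}_j]$, since it is precisely this term---acting through $\nabla_{\overline{e}_m}e_k$---that produces the $-\sum_s D^j_{si}\overline{D^k_{\ell s}}$ summand. One should also keep straight the two roles of the subscripts of $D$, matching $D^j_{ik}=\langle[\overline{e}_j,e_k],e_i\rangle$ throughout: the unconjugated factors carry the index $i$ of $\nabla_{e_i}$ in their holomorphic slot, while the conjugated factors carry the index $j$ of $\nabla_{\overline{e}_j}$. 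Note that no appeal to the Jacobi/Bianchi identities (\ref{CCCD}) is required for this identity itself.
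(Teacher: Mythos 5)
Your computation is correct: the connection coefficients $\nabla_{e_k}e_i=\sum_j D^j_{ik}e_j$ and $\nabla_{\overline{e}_k}e_i=-\sum_j\overline{D^i_{jk}}\,e_j$ are read off correctly from $\theta$, the left-invariance justifies treating them as constants, and all four terms — including the two coming from the $(1,0)$ and $(0,1)$ parts of $[e_i,\overline{e}_j]$ — land with the right signs to reproduce the stated formula, with no need for the Jacobi identity (\ref{CCCD}). The paper itself does not prove this lemma but quotes it from \cite{LZ}, where the derivation is the same routine calculation (phrased there via the curvature form $\Theta=d\theta-\theta\wedge\theta$ and the structure equations rather than directly on frame vector fields); your vector-field version is equivalent and essentially the intended argument.
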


In particular, we have
\begin{equation}
R_{i\overline{i}i\overline{i}} = \sum_{s=1}^n \big( |D^s_{ii}|^2 - |D^i_{si}|^2 - 2 \mbox{Re} \{  D^i_{si} \overline{D^i_{is}} \} \big) , \ \ \ \ \ \ \ \forall \ 1\leq i\leq n. \label{HD}
\end{equation}
By (\ref{Rhat}) and (\ref{R}), we immediately obtain the expression for $\widehat{R}$, and in particular, we have

\begin{lemma} [\cite{LZ}]  \label{lemma4}
Under any unitary basis $e$ of ${\mathfrak g}$, it holds that
\begin{equation*}
\widehat{R}_{i\overline{i}k\overline{k}} = \sum_{s=1}^n \big( |D^s_{ki}+D^s_{ik}|^2 - |D^k_{si}|^2 -|D^i_{sk}|^2 - 2 \mbox{Re} \{ D^k_{sk} \overline{D^i_{si} } +  D^i_{si} \overline{D^k_{ks} } + D^k_{sk} \overline{D^i_{is} } + D^i_{sk} \overline{D^i_{ks} } + D^k_{si} \overline{D^k_{is} }\} \big) , \label{RhatD}
\end{equation*}
for any $1\leq i,k \leq n$.
\end{lemma}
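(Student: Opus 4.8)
The plan is to derive the formula by direct substitution: insert the closed expression for $R_{i\overline{j}k\overline{\ell}}$ from Lemma \ref{lemma3} into the symmetrization (\ref{Rhat}), specialized to the index pattern $(i,\overline{i},k,\overline{k})$. By (\ref{Rhat}),
\begin{equation*}
\widehat{R}_{i\overline{i}k\overline{k}} = \frac{1}{4}\big( R_{i\overline{i}k\overline{k}} + R_{k\overline{i}i\overline{k}} + R_{i\overline{k}k\overline{i}} + R_{k\overline{k}i\overline{i}} \big),
\end{equation*}
so the first step is to expand each of these four components via Lemma \ref{lemma3}, tracking the substitutions $(i',j',k',\ell') = (i,i,k,k),\ (k,i,i,k),\ (i,k,k,i),\ (k,k,i,i)$ into its four-product formula.

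The heart of the argument is to regroup the resulting sixteen products of structure constants according to which of the four products in Lemma \ref{lemma3} they originate from. The four leading products $D^s_{k'i'}\overline{D^s_{\ell'j'}}$ yield $|D^s_{ki}|^2$, $|D^s_{ik}|^2$ and the two cross terms $D^s_{ik}\overline{D^s_{ki}}$, $D^s_{ki}\overline{D^s_{ik}}$; the latter two combine to $2\,\mbox{Re}\{D^s_{ki}\overline{D^s_{ik}}\}$, so that the whole group completes the square into $|D^s_{ki}+D^s_{ik}|^2$. The four products of type $-D^{\ell'}_{si'}\overline{D^{k'}_{sj'}}$ contribute $-|D^k_{si}|^2$ and $-|D^i_{sk}|^2$ together with a conjugate pair that collapses to $-2\,\mbox{Re}\{D^k_{sk}\overline{D^i_{si}}\}$. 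Finally, the eight products coming from the last two terms $-D^{j'}_{si'}\overline{D^{k'}_{\ell's}}$ and $-\overline{D^{i'}_{sj'}}D^{\ell'}_{k's}$ organize into four complex-conjugate pairs, each pairing a product of the first type with one of the second; each pair collapses to $-2\,\mbox{Re}\{\cdot\}$ of a single product, producing the remaining four real parts of $D^i_{si}\overline{D^k_{ks}}$, $D^i_{sk}\overline{D^i_{ks}}$, $D^k_{si}\overline{D^k_{is}}$ and $D^k_{sk}\overline{D^i_{is}}$. Summing over $s$ and assembling these pieces gives the stated identity.

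The only genuinely delicate point is combinatorial rather than conceptual: one must keep the four index patterns straight across all sixteen products and correctly match each product in the last group with its conjugate partner, which — depending on the indices — may come from the same curvature component or from a different one of the four. I expect this bookkeeping to be the main obstacle, since a single mislabeled index silently spoils a pairing. As a consistency check I would specialize $k=i$: the resulting expression must reduce to $R_{i\overline{i}i\overline{i}}$ as recorded in (\ref{HD}), which both pins down the overall normalization $\tfrac14$ inherited from (\ref{Rhat}) and confirms that no product has been dropped or mispaired. No tools beyond Lemma \ref{lemma3}, the definition (\ref{Rhat}), and the elementary identities $z\overline{z}=|z|^2$ and $z+\overline{z}=2\,\mbox{Re}\,z$ are needed.
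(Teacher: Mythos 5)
Your proposal follows exactly the paper's own route: the paper obtains Lemma \ref{lemma4} precisely by substituting the curvature formula of Lemma \ref{lemma3} into the symmetrization (\ref{Rhat}) at the index pattern $(i,\bar{i},k,\bar{k})$, and your bookkeeping of the sixteen products is correct, including the completion of the square $|D^s_{ki}+D^s_{ik}|^2$, the collapse of the second group to $-|D^k_{si}|^2-|D^i_{sk}|^2-2\,\mbox{Re}\{D^k_{sk}\overline{D^i_{si}}\}$, and the cross-component pairings in the last group (e.g.\ the term $-D^i_{sk}\overline{D^i_{ks}}$ coming from $R_{k\bar{i}i\bar{k}}$ pairs with $-\overline{D^i_{sk}}\,D^i_{ks}$ coming from $R_{i\bar{k}k\bar{i}}$).

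One caveat, which your own consistency check at $k=i$ is exactly designed to catch: carried to the end, your computation yields $\widehat{R}_{i\bar{i}k\bar{k}}=\frac{1}{4}\sum_{s=1}^n\big(\cdots\big)$, i.e.\ the stated right-hand side multiplied by the overall factor $\frac{1}{4}$ inherited from (\ref{Rhat}); the lemma as printed is missing this factor. Indeed, specializing the printed formula to $k=i$ gives $\sum_s\big(4|D^s_{ii}|^2-4|D^i_{si}|^2-8\,\mbox{Re}\{D^i_{si}\overline{D^i_{is}}\}\big)=4R_{i\bar{i}i\bar{i}}$ by (\ref{HD}), whereas (\ref{Rhat}) forces $\widehat{R}_{i\bar{i}i\bar{i}}=R_{i\bar{i}i\bar{i}}$. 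The $\frac{1}{4}$-normalized version is also the one the paper actually uses downstream: in the codimension-two setting, (\ref{ChernRhat1}) records $\widehat{R}_{i\bar{i}k\bar{k}}=\frac{1}{4}|Z_{ik}+Z_{ki}|^2$, which is what Lemma \ref{lemma3} and (\ref{Rhat}) produce (and one quarter of what the printed Lemma \ref{lemma4} would give, since in that case the only nonzero constants entering the formula are $D^1_{ki}=Z_{ki}$ and $D^1_{ik}=Z_{ik}$). So your derivation is correct as proposed; the discrepancy is a typo in the statement, namely a missing $\frac{1}{4}$ or a normalization mismatch with \cite{LZ}, not a gap in your argument.
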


\vspace{0.3cm}

\section{The almost abelian case }

Let $(\mathfrak{g},J,g)$ be a Lie algebra equipped with a Hermitian structure. Throughout this section, we will assume that $\mathfrak{g}$ is {\em almost abelian,}  namely, $\mathfrak{g}$ (is not abelian but it) contains an abelian ideal $\mathfrak{a}$ of codimension one. It is equivalent to the condition that it contains an abelian subalgebra of codimension 1 (\cite{BC}). Our goal is to confirm Conjecture \ref{conj2} for ${\mathfrak g}$. We will follow the notations of \cite{GZ}.

Assume that $(\mathfrak{g},J,g)$ is an almost  abelian Lie algebra equipped with a Hermitian structure. Let $\mathfrak{a} \subset \mathfrak{g}$ be an abelian ideal of of codimension $1$. Then the $J$-invariant ideal $\mathfrak{a}_J :=\mathfrak{a} \cap J\mathfrak{a}$ is of codimension $2$ in $\mathfrak{g}$, so we can always choose a unitary basis $\{e_1,e_2,\ldots ,e_n\}$ of $\mathfrak{g}^{1,0}$ such that $\mathfrak{a}$ is spanned by
$$
\mathfrak{a}=\mbox{span}_\mathbb{R} \{\sqrt{-1}(e_1-\bar{e}_1);\,(e_i+\bar{e}_i), \,\sqrt{-1}(e_i-\bar{e}_i);\ 2 \leq i \leq n \}.
$$
We will call such a unitary basis an {\em admissible frame}. Following \cite{GZ}, since $\mathfrak{a}$ is abelian, we have
$$
[e_i,e_j]=[e_i,\bar{e}_j]=[e_1-\bar{e}_1,e_i]=0, \ \ \ \ \ \ \   \forall  \ 2 \leq i,j \leq n .
$$
From this and (\ref{E2CD}) we get that
$$
C^\ast_{ij}=D^j_{\ast i}=D^1_{\ast i}=C^\ast _{1i}+\overline{D^i_{\ast 1}}=0, \ \ \ \ \ \ \   \forall  \ 2 \leq i,j \leq n .
$$
Since $\mathfrak{a}$ is an ideal, $[e_1 + \bar{e}_1,\mathfrak{a}] \subseteq \mathfrak{a}$, we get $C^1_{1i}=0$ and $D^1_{11}=\overline{D^1_{11}}$. Putting these together, we know that the only possibly non-trivial components of $C$ and $D$ are
\begin{equation}
D^1_{11}=\lambda \in \mathbb{R}, \ \ \  D^1_{i1}=v_i \in \mathbb{C}, \ \ \  D^j_{i1}=A_{ij}, \ \ \  C^j_{1i}=-\overline {A_{ji}}, \ \ \ \ \ \  2 \leq i,j \leq n , \label{CDZ}
\end{equation}
where $\lambda \in \mathbb{R}$, $v \in \mathbb{C}^{n-1}$ is a column vector, and $A=(A_{ij})_{2\leq i,j \leq n}$ is an $(n-1)\times (n-1)$ complex matrix. Expressed in terms of the coframe $\varphi$ dual to $e$, the structure equation (\ref{SED}) now becomes
$$
\begin{cases}
    d \varphi_{1}=- \lambda \varphi_{1} \wedge \overline{\varphi}_{1}, \\
    d \varphi_{i}=- \overline{v}_{i} \varphi_{1} \wedge \overline{\varphi}_{1} -\sum^{n}_{j=2}(\varphi_{1}+\overline{\varphi}_{1})\wedge \varphi_{j}, \,\,\,\,\, 2 \leq i \leq n.
\end{cases}
$$
The following characterization result from \cite[Proposition 7 and Lemma 6]{GZ} for almost abelian Hermitian Lie algebras  will be needed in our proof of Theorem \ref{thm}:
\begin{lemma} [\cite{GZ}] \label{lemma5}
Let $\mathfrak{g}$ be an almost abelian Lie algebra equipped with a Hermitian structure $(J,g)$, and let $e$ be an admissible frame of ${\mathfrak g}$. Then the following hold:
\begin{enumerate}
\item ${\mathfrak g}$ is unimodular \ $\Longleftrightarrow$ \ $\lambda + \mbox{tr}(A) + \overline{\mbox{tr}(A)} =0$; 
\item $g$ is K\"ahler \ $\Longleftrightarrow$ \ $v=0$ and $A+A^{\ast}=0$;
\item $g$ is Chern flat \ $\Longleftrightarrow$ \ $\lambda =0$, $v=0$, and $[A,A^{\ast}]=0$.
\end{enumerate}
Here $A^{\ast}$ stands for the conjugate transpose of the matrix $A$. 
\end{lemma}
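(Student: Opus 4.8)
The three assertions are independent computations in the structure constants (\ref{CDZ}), so I would establish them one at a time, in each case reducing the geometric quantity in question to an expression in $\lambda$, $v$ and $A$. For (1), I would first note that for any $a\in\mathfrak{a}$ the operator $\mbox{ad}_a$ sends $\mathfrak{g}$ into the ideal $\mathfrak{a}$ and kills $\mathfrak{a}$ (because $\mathfrak{a}$ is abelian), so it is nilpotent and traceless; consequently unimodularity reduces to $\mbox{tr}(\mbox{ad}_{x_0})=0$ for the single complementary direction $x_0=e_1+\overline{e}_1$. I would then compute $[x_0,e_i]$ and $[x_0,\overline{e}_i]$ directly from (\ref{E2CD}) and (\ref{CDZ}), read off the diagonal entries in the basis $\{e_i,\overline{e}_i\}$ of $\mathfrak{g}^{\mathbb C}$, and sum. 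The expected outcome is $\mbox{tr}(\mbox{ad}_{x_0})=-2(\lambda+\mbox{tr}(A)+\overline{\mbox{tr}(A)})$, which vanishes exactly under the stated condition.

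For (2), I would use that $g$ is K\"ahler if and only if the Chern torsion vanishes, and compute $T^j_{ik}$ from (\ref{T}). Since every nonzero entry of $C$ and $D$ carries an index equal to $1$, only the brackets involving $e_1$ survive: one finds $T^j_{ik}=0$ automatically whenever $i,k\geq 2$, while $T^1_{1k}=v_k$ and $T^j_{1k}=A_{kj}+\overline{A_{jk}}=(A+A^\ast)_{kj}$ for $j,k\geq2$. Hence $T=0$ is equivalent to $v=0$ together with $A+A^\ast=0$.

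For (3), I would first recall that the Chern curvature of the holomorphic tangent bundle is a $(1,1)$-form valued in $\mbox{End}(T^{1,0})$, so the only possibly nonzero components of $R$ are the mixed ones $R_{i\overline{j}k\overline{\ell}}$ of Lemma \ref{lemma3}; thus Chern flatness is equivalent to the vanishing of all of these. The cleanest entry point is the diagonal component: using (\ref{HD}) one computes $R_{1\overline{1}1\overline{1}}=-2\lambda^2-|v|^2$, so $R=0$ already forces $\lambda=0$ and $v=0$. With $\lambda=v=0$ the only surviving $D$'s are $D^j_{i1}=A_{ij}$, and a short bookkeeping of the four products in Lemma \ref{lemma3} shows that $R_{i\overline{j}k\overline{\ell}}$ vanishes unless $i=j=1$, while $R_{1\overline{1}k\overline{\ell}}=(AA^\ast)_{k\ell}-(A^\ast A)_{k\ell}=[A,A^\ast]_{k\ell}$ for $k,\ell\geq2$. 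This yields both directions at once: $R=0\iff \lambda=0,\ v=0,\ [A,A^\ast]=0$.

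The trace and torsion computations in (1) and (2) are routine. The main obstacle is the curvature computation in (3): one must track carefully which of the four products in the formula of Lemma \ref{lemma3} survive once the constraint that every nonzero $D$ has lowest index equal to $1$ is imposed, and then recognize the surviving sum as the commutator $[A,A^\ast]$. What makes the forward direction painless is the identity $R_{1\overline{1}1\overline{1}}=-2\lambda^2-|v|^2$, since it disposes of $\lambda$ and $v$ before one has to confront the $A$-dependence of the remaining components.
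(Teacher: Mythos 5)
Your proposal is correct. Note that the paper never proves Lemma \ref{lemma5}: it is imported verbatim from \cite{GZ} (Proposition 7 and Lemma 6 there), so there is no in-paper argument to compare against; the only available cross-check is consistency with the formulas the paper does quote, and your computations pass it. Specifically, your torsion components $T^1_{1k}=v_k$ and $T^j_{1k}=(A+A^{\ast})_{kj}$, and your curvature values $R_{1\bar{1}1\bar{1}}=-2\lambda^2-|v|^2$ and $R_{1\bar{1}k\bar{\ell}}=[A,A^{\ast}]_{k\ell}$ (once $\lambda=v=0$), agree exactly with (\ref{eq:aaTR}). The three steps you outline are all sound: in (1), $\mathrm{ad}_a$ is two-step nilpotent for $a\in{\mathfrak a}$ (its image lies in ${\mathfrak a}$, which it annihilates), so unimodularity does reduce to the single vector $e_1+\overline{e}_1$, and the bracket relations read off from (\ref{E2CD}) and (\ref{CDZ}) give $\mathrm{tr}(\mathrm{ad}_{e_1+\overline{e}_1})=-2\big(\lambda+\mathrm{tr}(A)+\overline{\mathrm{tr}(A)}\big)$ as you predict; in (2), the identification of K\"ahlerness with vanishing Chern torsion and the bookkeeping from (\ref{T}) are right; in (3), the type $(1,1)$ property of the Chern curvature justifies reducing flatness to the components of Lemma \ref{lemma3}, and after $\lambda=v=0$ the third and fourth products there vanish identically (each would force the second lower index of a nonzero $D$ to be simultaneously $1$ and at least $2$), leaving precisely $R_{1\bar{1}k\bar{\ell}}=(AA^{\ast}-A^{\ast}A)_{k\ell}$ and zero in all other positions, which yields both implications at once.
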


From \cite{GZ}, we also know that under any admissible frame $e$, the only possibly non-trivial Chern torsion and curvature components are
\begin{equation}
\left\{ \begin{split}
T^1_{1i}=v_i,\ \ \ \ \ T^j_{1i}= A_{ij}+\overline{A_{ji}} , \hspace{1.9cm} \\
R_{1\bar{1}1\bar{1}} = -2\lambda^2-|v|^2, \ \ \ \ \ (R_{1\bar{1}i\bar{1}}) = - A^{\ast}v,  \\
(R_{1\bar{1}i\bar{j}}) = v v^{\ast} + [A, A^{\ast}] -\lambda (A+A^{\ast}),  \hspace{0.7cm}
\end{split}
\right.  \label{eq:aaTR}
\end{equation}
for any $2\leq i,j\leq n$. From this, we deduce the following:

\begin{proposition} \label{prop2}
Let $({\mathfrak g}, J,g)$ be an almost abelian Lie algebra equipped with a Hermitian structure. If the Chern holomorphic sectional curvature $H$ is a constant $c$,  then $c=0$ and $g$ is Chern flat.
\end{proposition}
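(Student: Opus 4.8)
The plan is to match the hypothesis $H=c$ against the Chern-flatness criterion of Lemma \ref{lemma5}(3): I want to deduce, purely from $H=c$, that $\lambda=0$, $v=0$ and $[A,A^{\ast}]=0$, which then gives Chern flatness, and along the way that $c=0$. Recall from (\ref{RhatC}) that $H=c$ is equivalent to $\widehat{R}_{i\bar{j}k\bar{\ell}}=\tfrac{c}{2}(\delta_{ij}\delta_{k\ell}+\delta_{i\ell}\delta_{kj})$ for all indices, and from (\ref{Rhat}) that on the diagonal $\widehat{R}_{i\bar{i}i\bar{i}}=R_{i\bar{i}i\bar{i}}$. The whole proof is then a matter of reading off enough of these equations using the explicit structure constants (\ref{CDZ}) and the curvature formulas (\ref{eq:aaTR}), (\ref{HD}) and Lemma \ref{lemma3}.

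First I would pin down $c$, $\lambda$ and $v$. By (\ref{CDZ}) every non-vanishing $D$ has second lower index equal to $1$; hence for $i\ge 2$ each term in the diagonal formula (\ref{HD}) vanishes (the factors $D^s_{ii}$ and $D^i_{si}$ carry a second index $i\neq 1$), giving $R_{i\bar{i}i\bar{i}}=0$ for every $i\ge 2$. Taking $i=j=k=\ell\ge 2$ in (\ref{RhatC}) and using $\widehat{R}_{i\bar{i}i\bar{i}}=R_{i\bar{i}i\bar{i}}$ then forces $c=0$ (here I assume $n\ge 2$; the case $n=1$ is degenerate, with $g$ automatically K\"ahler, and must be discussed separately). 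With $c=0$ the index choice $i=j=k=\ell=1$ gives $0=\widehat{R}_{1\bar{1}1\bar{1}}=R_{1\bar{1}1\bar{1}}=-2\lambda^2-|v|^2$ by the first curvature line of (\ref{eq:aaTR}); since $\lambda\in\mathbb{R}$, this forces $\lambda=0$ and $v=0$.

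It remains to show $[A,A^{\ast}]=0$. Setting $\lambda=v=0$, I would feed the surviving structure constants $D^j_{i1}=A_{ij}$ into Lemma \ref{lemma3} and check, by the same index bookkeeping, that the only Chern curvature components that can be non-zero are $R_{1\bar{1}i\bar{j}}=[A,A^{\ast}]_{ij}$ for $i,j\ge 2$: in every other case at least one of the four summands is forced to contain a $D$ with a non-admissible index pattern, so the mixed components $R_{i\bar{1}1\bar{j}}$, $R_{1\bar{j}i\bar{1}}$ and $R_{i\bar{j}1\bar{1}}$ all vanish. Consequently the three symmetrization partners of $R_{1\bar{1}i\bar{j}}$ in (\ref{Rhat}) drop out and $\widehat{R}_{1\bar{1}i\bar{j}}=\tfrac14[A,A^{\ast}]_{ij}$. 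Since $c=0$, (\ref{RhatC}) makes the left side zero for all $i,j\ge 2$, whence $[A,A^{\ast}]=0$. Together with $\lambda=v=0$, Lemma \ref{lemma5}(3) then gives that $g$ is Chern flat, and we have already shown $c=0$.

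I expect the one genuinely delicate step to be the reduction in the last paragraph, namely verifying that the symmetrization partners of $R_{1\bar{1}i\bar{j}}$ vanish. This is exactly where the non-K\"ahler asymmetry of the Chern curvature is used: $R_{1\bar{1}i\bar{j}}=[A,A^{\ast}]_{ij}$ need not be symmetric under swapping the holomorphic with the anti-holomorphic pairs, so only after confirming that its three partners are identically zero can one conclude that $\widehat{R}_{1\bar{1}i\bar{j}}$ is a nonzero multiple of $[A,A^{\ast}]_{ij}$ and thereby convert the curvature-constancy hypothesis into the commutator condition. Note also that this argument never invokes unimodularity, matching the claim that in the Chern case it is not needed. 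Everything else is routine substitution into the formulas already recorded.
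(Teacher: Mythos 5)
Your proof is correct and takes essentially the same route as the paper's: both deduce $c=0$, $\lambda=0$, $v=0$ from the diagonal components $R_{i\bar{i}i\bar{i}}=0$ ($i\geq 2$) together with $R_{1\bar{1}1\bar{1}}=-2\lambda^2-|v|^2$, then identify $\widehat{R}_{1\bar{1}i\bar{j}}=\tfrac{1}{4}[A,A^{\ast}]_{ij}$ and invoke Lemma \ref{lemma5}(iii). The only cosmetic difference is that you re-derive the vanishing of the symmetrization partners of $R_{1\bar{1}i\bar{j}}$ directly from Lemma \ref{lemma3} and (\ref{CDZ}), whereas the paper absorbs this bookkeeping into its citation of (\ref{eq:aaTR}) from \cite{GZ}.
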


\begin{proof}
By (\ref{eq:aaTR}), we have $R_{1\bar{1}1\bar{1}} = -2\lambda^2-|v|^2$ and $R_{i\bar{i}i\bar{i}}=0$ are any $2\leq i\leq n$. So when $H=c$, we get $c=0$, and $\lambda =0$, $v=0$. On the other hand, by (\ref{eq:aaTR}) again we have
$$ 4\widehat{R}_{1\bar{1}i\bar{j}} =  R_{1\bar{1}i\bar{j}} = ([A, A^{\ast}])_{ij}, \ \ \ 2\leq i,j\leq n. $$ 
Therefore  by $c=0$ and (\ref{RhatC}) we get $[A, A^{\ast}]=0$, hence $g$ is Chern flat by part (iii) of Lemma \ref{lemma5}. This completes the proof of the proposition. Note that here we did not need to assume that ${\mathfrak g}$ is unimodular.
\end{proof}

Next let us consider the constant holomorphic sectional curvature condition for Levi-Civita connection. We have the following:
\begin{proposition} \label{prop3}
Let $({\mathfrak g}, J,g)$ be an almost abelian Lie algebra equipped with a Hermitian structure. Assume that the Levi-Civita holomorphic sectional curvature $H^r$ is a constant $c$  and ${\mathfrak g}$ is unimodular. Then $c=0$ and $g$ is K\"ahler flat.
\end{proposition}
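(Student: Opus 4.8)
The plan is to imitate the scheme of Proposition \ref{prop2}, but now carrying the Chern-to-Levi-Civita correction terms of Lemma \ref{lemma2} and crucially exploiting unimodularity. First I would extract the diagonal holomorphic sectional curvatures. Feeding the torsion of (\ref{eq:aaTR}) into the first line of (\ref{eq:iikk}), together with $R_{1\bar1 1\bar1}=-2\lambda^2-|v|^2$ and $R_{i\bar i i\bar i}=0$ for $i\ge2$, gives
\[
R^r_{1\bar1 1\bar1} = -2\lambda^2 - \tfrac32|v|^2, \qquad R^r_{p\bar p p\bar p} = -2\,(\mathrm{Re}\,A_{pp})^2 \quad (2\le p\le n).
\]
Since $H^r=c$ forces all of these to equal $c$, we obtain at once $c\le0$, the relations $(\mathrm{Re}\,A_{pp})^2=-c/2$ for every $p$, and $2\lambda^2+\tfrac32|v|^2=-c$. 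The remaining task is to rule out $c<0$; this is exactly where unimodularity must enter.

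Next I would compute the two families of mixed symmetrized components from Lemma \ref{lemma2}, using Lemma \ref{lemma4} for the Chern part and (\ref{eq:aaTR}) for the torsion part. Because every nonzero structure constant $D$ has last lower index equal to $1$, most terms collapse, and a direct bookkeeping yields
\[
\widehat R^r_{p\bar p q\bar q} = -\tfrac14\,|A_{qp}+\overline{A_{pq}}|^2 - (\mathrm{Re}\,A_{pp})(\mathrm{Re}\,A_{qq}),
\]
\[
\widehat R^r_{1\bar1 p\bar p} = \tfrac78|v_p|^2 + \sum_{s\ge2}\big(|A_{ps}|^2-|A_{sp}|^2\big) - 2\lambda\,\mathrm{Re}\,A_{pp} - \tfrac18\sum_{s\ge2}|A_{sp}+\overline{A_{ps}}|^2 .
\]
Imposing $H^r=c$ through (\ref{RrhatC}) sets the first expression equal to $\tfrac{c}{2}(1+\delta_{pq})$ and the second equal to $\tfrac{c}{2}$.

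The key step is to sum these relations and invoke Lemma \ref{lemma5}(1), which reads $\lambda=-2\sum_{p\ge2}\mathrm{Re}\,A_{pp}$. Summing $\widehat R^r_{1\bar1 p\bar p}=\tfrac c2$ over $p$ makes the antisymmetric block $\sum_{p,s}(|A_{ps}|^2-|A_{sp}|^2)$ telescope to $0$ and turns $-2\lambda\sum_p\mathrm{Re}\,A_{pp}$ into $+\lambda^2$; writing $N=\|A+A^\ast\|^2$, $V=|v|^2$, $L=\lambda^2$, and summing $\widehat R^r_{p\bar p q\bar q}$ over all $p,q\ge2$ (using $(\sum_p\mathrm{Re}\,A_{pp})^2=\lambda^2/4$), I obtain
\[
\tfrac78 V + L - \tfrac18 N = \tfrac{(n-1)c}{2}, \qquad -\tfrac14 N - \tfrac14 L = \tfrac{n(n-1)c}{2}.
\]
Now assume $c<0$ and set $r=\sqrt{-c/2}$. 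Each $\mathrm{Re}\,A_{pp}=\pm r$, so $\lambda=-2rm$ with $m\in\Z$ the signed count of signs, whence $L=-2cm^2$; the index-$1$ diagonal relation then forces $V=\tfrac23(-c)(1-4m^2)$. Positivity of $V$ leaves only $m=0$, so $\lambda=0$, $L=0$, and $V=-\tfrac{2c}{3}$. Substituting $L=0$ into the two summed identities gives $N=-2n(n-1)c$ and then forces $(n-1)(n-2)=7/3$, impossible for integer $n$. Hence $c=0$.

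Finally, with $c=0$ the diagonal relations give $\lambda=0$, $v=0$, and $\mathrm{Re}\,A_{pp}=0$, while $\widehat R^r_{p\bar p q\bar q}=0$ forces $A_{qp}+\overline{A_{pq}}=0$ for all $p,q$, i.e. $A+A^\ast=0$. By Lemma \ref{lemma5}(2) the metric is K\"ahler; since $A+A^\ast=0$ makes $[A,A^\ast]=0$ automatic and $\lambda=0$, Lemma \ref{lemma5}(3) shows it is Chern flat, hence K\"ahler (Levi-Civita) flat. I expect the main obstacle to be the error-prone assembly of $\widehat R^r_{1\bar1 p\bar p}$ from Lemmas \ref{lemma2} and \ref{lemma4}, and the recognition that unimodularity is precisely what converts the cross term into $+\lambda^2$ and makes the positivity of $V$ bite; without it (the non-unimodular case) the value $m\ne0$ is permitted and genuine non-K\"ahler examples with $c<0$ survive, consistent with the remark in the introduction.
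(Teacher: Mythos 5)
Your overall strategy is the same as the paper's: use Lemma \ref{lemma2} (i.e.\ (\ref{eq:iikk})) together with the torsion and curvature data (\ref{eq:aaTR}) to write out $R^r_{1\bar{1}1\bar{1}}$, $R^r_{p\bar{p}p\bar{p}}$, $\widehat{R}^r_{p\bar{p}q\bar{q}}$, $\widehat{R}^r_{1\bar{1}p\bar{p}}$, impose $H^r=c$ via (\ref{RrhatC}), sum over indices, and let unimodularity (Lemma \ref{lemma5}(i), $\mathrm{tr}(A+A^\ast)=-\lambda$) produce the $+\lambda^2$ term; the identification of the $c=0$ case as K\"ahler flat via Lemma \ref{lemma5}(ii),(iii) is identical. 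The paper diagonalizes $S=A+A^\ast$ first, after which a single summed identity gives $0\geq \frac{c}{4}(n-1)=\frac{1}{8}|v|^2+\lambda^2\geq 0$ and hence $c=0$ directly; you skip the diagonalization and instead close with the sign count $m$, a second summed identity, and an integrality contradiction. That is a valid, if longer, route.

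There is, however, a genuine computational error in your key formula for $\widehat{R}^r_{1\bar{1}p\bar{p}}$: the Chern contribution must be $\widehat{R}_{1\bar{1}p\bar{p}}=\frac{1}{4}R_{1\bar{1}p\bar{p}}$, not $R_{1\bar{1}p\bar{p}}$. In the symmetrization (\ref{Rhat}) the other three terms $R_{p\bar{1}1\bar{p}}$, $R_{1\bar{p}p\bar{1}}$, $R_{p\bar{p}1\bar{1}}$ all vanish, because every nonzero structure constant $D^j_{ik}$ in the almost abelian case has last lower index $1$, and Lemma \ref{lemma3} then kills each summand; this is exactly the identity $4\widehat{R}_{1\bar{1}i\bar{j}}=R_{1\bar{1}i\bar{j}}$ invoked in the proof of Proposition \ref{prop2}. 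So the correct expression is $\frac{1}{8}|v_p|^2+\frac{1}{4}[A,A^\ast]_{pp}-\frac{1}{2}\lambda\,\mathrm{Re}\,A_{pp}-\frac{1}{8}\sum_{s}|A_{sp}+\overline{A_{ps}}|^2$, and your first summed identity should read $\frac{1}{8}V+\frac{1}{4}L-\frac{1}{8}N=\frac{(n-1)c}{2}$ rather than $\frac{7}{8}V+L-\frac{1}{8}N=\frac{(n-1)c}{2}$. (Your version is what one obtains by taking Lemma \ref{lemma4} exactly as printed; note that statement is itself missing a factor $\frac{1}{4}$, as one sees by setting $k=i$ in it and comparing with (\ref{HD}).) You are fortunate that the mistake is harmless: rerunning your endgame with the corrected identity yields $(n-1)(n-2)=\frac{1}{3}$ instead of $\frac{7}{3}$, still impossible for an integer $n$, so the contradiction, and with it your conclusion, survives. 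Nevertheless the formula should be fixed, or better, replaced by the paper's positivity argument, which requires no integrality considerations at all.
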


\begin{proof}
Write $S=A+A^{\ast}$. By a unitary change of $\{ e_2, \ldots , e_n\}$ if necessary, we may assume that $S$ is a real diagonal matrix. Since $H^r=c$, by (\ref{RrhatC}) we have
$$ R^r_{ 1\bar{1}1\bar{1} } = R^r_{ i\bar{i}i\bar{i} } =c, \ \ \ \  \widehat{R}^r_{ 1\bar{1}i\bar{i} } =  \widehat{R}^r_{ i\bar{i}k\bar{k} }= \frac{c}{2}, \ \ \ \ \ \ \forall \ 2 \leq i\neq k\leq n. $$
Therefore for any $2\leq i\neq k\leq n$, by (\ref{eq:iikk}) and the fact that $S$ is real diagonal we get
\begin{eqnarray*}
  c & = & R^r_{1\bar{1}1\bar{1}} \ = \   R_{1\bar{1}1\bar{1}} - \frac{1}{2}\sum_s |T^1_{1s}|^2 \ = \ -2\lambda^2-\frac{3}{2}|v|^2, \\
 c & = &  R^r_{i\bar{i}i\bar{i}} \ = \   R_{i\bar{i}i\bar{i}} - \frac{1}{2}\sum_s |T^i_{is}|^2 \ = \  - \frac{1}{2} |S_{ii}|^2 , \\
\frac{c}{2} & = & \widehat{R}^r_{i\bar{i}k\bar{k}} \ = \  \widehat{R}_{i\bar{i}k\bar{k}} - \frac{1}{8}\sum_s \big( |T^k_{is}|^2 + |T^i_{ks}|^2 + 2\mbox{Re} \{ T^i_{is} \overline{ T^k_{ks}} \} \big) \ = \ -\frac{1}{4} S_{ii}S_{kk}, \\
 \frac{c}{2} & = & \widehat{R}^r_{1\bar{1}i\bar{i}} \ = \  \widehat{R}_{1\bar{1}i\bar{i}} - \frac{1}{8}\sum_s \big( |T^1_{is}|^2 + |T^i_{1s}|^2 + 2\mbox{Re} \{ T^1_{1s} \overline{ T^i_{is}} \} \big) \ = \ \\
 & = & \frac{1}{4}\big( |v_i|^2 + [A, A^{\ast}]_{ii} - \lambda S_{ii}\big)  - \frac{1}{8}( |v_i|^2 + |S_{ii}|^2).
 \end{eqnarray*}
The second and third lines imply that $c\leq 0$ and $S_{ii}=\delta \sqrt{-2c}$ for each $2\leq i\leq n$, where $\delta \in \{ 1, -1\}$. By summing $i$ from $2$ to $n$ in the last line, we get
$$ \frac{c}{2}(n-1) = \frac{1}{8} |v|^2 - \lambda \mbox{tr}(S) + \frac{c}{4}(n-1), $$
or equivalently,
$$ 0\geq \frac{c}{4}(n-1) = \frac{1}{8} |v|^2 - \lambda \mbox{tr}(S). $$
When ${\mathfrak g}$ is assumed to be unimodular, we have $\mbox{tr}(S)=-\lambda$ by part (i) of Lemma \ref{lemma5}, so the far right hand side of the above becomes $ \frac{1}{8} |v|^2 + \lambda^2$ which is non-negative, hence we conclude that $c=0$, $v=0$, $\lambda =0$, and $S=0$. This means that $g$ is K\"ahler and flat, and we have completed the proof of Proposition \ref{prop3}. 
\end{proof}

\noindent {\bf Remark:} (1). Note that the combination of Propositions \ref{prop2} and \ref{prop3} immediately lead to the proof of Theorem \ref{thm} in the almost abelian case. (2). If we assume $c=0$ in Proposition \ref{prop3}, then the argument in the proof implies that $v=0$, $\lambda =0$, $S=0$. Therefore, {\em any almost abelian Hermitian Lie algebra with vanishing Levi-Civita holomorphic sectional curvature must be K\"ahler flat (and unimodular).} (3). From the proof of Proposition \ref{prop3} we also see that, without the assumption that ${\mathfrak g}$ is unimodular, then it is possible that $c$ might take negative values. For instance, when $c=-2$, $\lambda =1$, $v=0$, and each $S_{ii}=2$. 

\vspace{0.2cm}

In the following, we will give an explicit example of a non-unimodular almost abelian Hermitian Lie algebra, whose Levi-Civita holomorphic sectional curvature is a negative constant, yet it is non-K\"ahler (hence it is not a complex space form).  This again illustrates the point we mentioned earlier that the compactness assumption in Conjectures \ref{conj1} or \ref{conj2} is  a necessary one. 

\vspace{0.2cm}

\noindent {\bf Example:} Fix any integer $n\geq 2$. Let ${\mathfrak g}= {\mathbb R}\{ X, Y, Z_3, \ldots , Z_{2n}\} $ be the  Lie algebra with Lie brackets 
$$ [X,Y]=\sqrt{2} Y, \ \ \ [X,Z_3]=-\sqrt{2}Z_3, \ \ldots , \ [X,Z_{2n}]=-\sqrt{2}Z_{2n}, $$
while all other Lie brackets are zero. Clearly ${\mathfrak a}= {\mathbb R}\{  Y, Z_3, \ldots , Z_{2n}\} $ is an abelian ideal, so  ${\mathfrak g}$ is almost abelian. Let $g=\langle ,\rangle$ be the metric so the above basis is orthonormal, and $J$ the almost complex structure defined by 
$$ JX=Y, \ \ \ JZ_{2i-1}=Z_{2i}, \ \ \ 2\leq i\leq n. $$
Then clearly $J$ is integrable, so $(J,g)$ becomes a Hermitian structure on ${\mathfrak g}$. Note that the trace of $\mbox{ad}_X$ is equal to $-(2n-3)\sqrt{2}$ which is non-zero, hence  ${\mathfrak g}$ is not unimodular. Let us verify that it has constant negative Levi-Civita holomorphic sectional curvature. Write
$$ e_1=\frac{1}{\sqrt{2}}(X-\sqrt{-1}Y), \ \ \ e_i=\frac{1}{\sqrt{2}}(Z_{2i-1}-\sqrt{-1}Z_{2i}),\ \ \ 2\leq i\leq n, $$
then $e$ becomes an admissible frame. We compute that
$$   [e_1, \overline{e}_1] = - e_1 + \overline{e}_1, \ \ \   [e_1, e_i]=-e_i, \ \ \ [e_1, \overline{e}_i] = - \overline{e}_i, \ \ \ \ 2\leq i\leq n. $$
Therefore the structure constants are $\lambda =1$, $v=0$, and $A=I$. It is a straight-forward computation to show that $H^r=-2$ is a negative constant, while $T^i_{1i}=2$ for each $2\leq i\leq n$, so the metric is not K\"ahler.

\vspace{0.3cm}

\section{Lie algebra with $J$-invariant abelian ideal of codimension 2}
In this section, we will restrict ourselves to Lie algebras which contain $J$-invariant abelian ideals of codimension $2$. Throughout this section, we will let $\mathfrak{g}$ be a Lie algebra equipped with a Hermitian structure $(J,g)$ such that $\mathfrak{a}\subset {\mathfrak g}$ is an abelian ideal  of codimension 2 which is  $J$-invariant.  Note that such a $\mathfrak{g}$ is always almost abelian, and it is always $3$-step solvable, but in general it might not be $2$-step solvable. 

Following \cite{GZ}, we will call a unitary basis $\{e_{1},e_{2}, \ldots , e_{n}\}$ of $\mathfrak{g}^{1,0}$ an {\em admissible frame} of ${\mathfrak g}$, if 
$$
\mathfrak{a}= \mbox{span}_{\mathbb{R}} \{e_{i}+\bar{e}_{i}, \sqrt{-1}(e_{i}-\bar{e}_{i});\ 2 \leq i \leq n\},
$$
and $D^1_{11}\geq 0$. The latter can be achieved by rotating $e_1$ by a complex number of norm $1$. Since $\mathfrak{a}$ is an abelian ideal, we have
$$
C^{\ast}_{ij}=D^{j}_{\ast i}=C^{1}_{\ast \ast}=D^{i}_{1 \ast}=D^{\ast}_{1 i}=0,\,\,\,\,\,\,\, \forall \ 2 \leq i,j \leq n,
$$
hence the only possibly non-zero components of $C$ and $D$ are
\begin{equation} \label{XYZv}
C^{j}_{1i}=X_{ij},\,\,\,\, D^{1}_{11}=\lambda,\,\,\,\, D^{j}_{i1}=Y_{ij},\,\,\,\, D^{1}_{ij}=Z_{ij},\,\,\,\, D^{1}_{i1}=v_{i},\,\,\,\,\,\,\forall \ 2 \leq i ,j \leq n,
\end{equation}
where $\lambda \geq 0$, $v \in \mathbb{C}^{n-1}$ is an column vector, and $X$, $Y$, $Z$ are $(n-1)\times (n-1)$ complex matrices. Denote by $\{ \varphi_1, \ldots , \varphi_n\} $ the coframe of $(1,0)$-forms dual to $e$, and for convenience let us write $\varphi =\,^t\!(\varphi_2, \ldots , \varphi_n)$ as a column vector. Then the structure equation becomes
$$
\begin{cases}
    d \varphi_{1}=- \lambda \varphi_{1} \overline{\varphi}_{1} \\
    d \varphi=- \varphi_{1} \overline{\varphi}_{1} \bar{v}-\varphi_{1}\,  ^t\!X \varphi+\overline{\varphi}_{1} \overline{Y} \varphi -\varphi_{1} \overline{Z} \overline{\varphi}
\end{cases}
$$
By the equations (\ref{CCCD}), we know that these structure constants obey the following
\begin{equation}  \label{XYZ}
 \begin{cases}
    \lambda (X^{\ast}+Y)+[X^{\ast},Y]-Z \overline{Z}=0 \\
    \lambda Z-(Z \,^{t}\!X+YZ)=0                
 \end{cases}
\end{equation}
By (\ref{T}), the only possibly non-zero Chern torsion components are
\begin{equation} \label{codim2-torsion}
T^{1}_{1 i}=v_{i},\,\,\,\,\, T^{1}_{ij}=Z_{ji}-Z_{ij},\,\,\,\,\, T^{j}_{1i}=Y_{ij}-X_{ij},\,\,\,\,\,\,\,\, 2 \leq i , j\leq n.
\end{equation}
The following properties were established in \cite{GZ}:
\begin{lemma} [\cite{GZ}] \label{lemma-codim2}
Let $(\mathfrak{g},J,g)$ be a Lie algebra with a Hermitian structure and let $\mathfrak{a}\subset {\mathfrak g}$ be an abelian ideal of codimension $2$ that is  $J$-invariant. Let $e$ be an admissible frame and the possibly non-trivial structure constants are given by (\ref{XYZv}). Then the following hold:
\begin{enumerate}
\item ${\mathfrak g}$ is unimodular \ $\Longleftrightarrow$ \ $\lambda + \mbox{tr}(Y\!-\!X)=0$;
\item $g$ is K\"ahler \ $\Longleftrightarrow$ \ $v=0$, $\,^t\!Z=Z$, and $X=Y$;
\item $g$ is Chern flat \ $\Longleftrightarrow$ \  $\lambda =0$, $v=0$, $Z=0$, $[Y,Y^{\ast}]=0$, and $[Y,X^{\ast}]=0$. 
\end{enumerate}
\end{lemma}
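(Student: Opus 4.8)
The plan is to prove the three equivalences separately, since each rests on a different piece of data: unimodularity on the trace of the adjoint action, the K\"ahler condition on the Chern torsion (\ref{codim2-torsion}), and Chern flatness on the curvature formula of Lemma \ref{lemma3}. A useful guiding observation throughout is that, by Lemma \ref{lemma3}, the Chern curvature depends only on the structure constants $D$ (that is, on $\lambda$, $v$, $Y$, $Z$) and not on $X$; this is exactly what will let me isolate the flatness conditions one at a time.

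For part (i) I would compute $\tr(\mathrm{ad}_x)$ directly from the bracket relations (\ref{E2CD}), working in the complexified basis $\{e_1,\ldots,e_n,\overline{e}_1,\ldots,\overline{e}_n\}$; since the trace is unaffected by complexification and is linear in $x$, it suffices to evaluate it on this basis. For an index $A\ge 2$ the adjoint action of $e_A$ produces no diagonal entries, because the relevant structure constants ($D^j_{\ast i}$, $D^i_{1\ast}$, and so on) all vanish for $i\ge 2$; hence $\tr(\mathrm{ad}_{e_A})=0$. Only $e_1$ contributes, and reading off the diagonal of $\mathrm{ad}_{e_1}$ from (\ref{E2CD}) and (\ref{XYZv}) gives $\tr(\mathrm{ad}_{e_1})=\tr(X)-\lambda-\tr(Y)$. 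Unimodularity is the vanishing of $\tr(\mathrm{ad}_x)$ for every $x$, which reduces to $\tr(\mathrm{ad}_{e_1})=0$, i.e. $\lambda+\tr(Y-X)=0$. For part (ii) I would use that the metric is K\"ahler if and only if its Chern torsion vanishes; reading off (\ref{codim2-torsion}), $T=0$ is precisely $v=0$, $Z_{ji}=Z_{ij}$, and $Y_{ij}=X_{ij}$, that is $v=0$, $\,^t\!Z=Z$, and $X=Y$.

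Part (iii) is the substantive step and the main obstacle. Here I would substitute (\ref{XYZv}) into Lemma \ref{lemma3} and sort the components $R_{i\overline{j}k\overline{\ell}}$ according to how many of the indices equal $1$. The computation then proceeds in stages. First, $R_{1\overline{1}1\overline{1}}=-2\lambda^2-|v|^2$, so flatness forces $\lambda=0$ and $v=0$. Next, when all four indices lie in $\{2,\ldots,n\}$, the admissibility constraints on $D$ annihilate all but one of the four sums in Lemma \ref{lemma3}, leaving $R_{a\overline{b}c\overline{d}}=Z_{ca}\overline{Z_{db}}$; specializing to $b=a$, $d=c$ yields $R_{a\overline{a}c\overline{c}}=|Z_{ca}|^2$, so $R=0$ forces $Z=0$. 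With $\lambda=v=Z=0$ only the block $D^j_{i1}=Y_{ij}$ survives, and a short recomputation shows that the sole nonzero curvature component is $R_{1\overline{1}c\overline{d}}=[Y,Y^{\ast}]_{cd}$; hence, under these reductions, $R=0$ if and only if $[Y,Y^{\ast}]=0$.

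Finally, I would account for the remaining condition $[Y,X^{\ast}]=0$ in the statement. This is not an independent consequence of flatness---indeed $X$ never enters the curvature---but is forced by the Jacobi identity: once $\lambda=v=Z=0$, the first relation in (\ref{XYZ}) collapses to $[X^{\ast},Y]=0$, which is the same as $[Y,X^{\ast}]=0$. The converse implication follows by reversing the computation, all nonzero curvature components having already been identified. I expect the only real difficulty to be bookkeeping: one must keep careful track of which index patterns of $D$ are admissible so that the four sums in Lemma \ref{lemma3} collapse as claimed, and organize the $R_{i\overline{j}k\overline{\ell}}$ by index type so that no block is overlooked. The argument is otherwise routine rather than conceptual.
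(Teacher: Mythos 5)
Your proof is correct. Note that this paper never proves Lemma \ref{lemma-codim2} itself --- it is imported from \cite{GZ} --- so the comparison is against the computations the paper displays and uses: your unimodularity trace $\mathrm{tr}(\mathrm{ad}_{e_1})=\mathrm{tr}(X)-\lambda-\mathrm{tr}(Y)$, your torsion reading of (\ref{codim2-torsion}), and your curvature components $R_{1\bar{1}1\bar{1}}=-2\lambda^2-|v|^2$, $R_{a\bar{b}c\bar{d}}=Z_{ca}\overline{Z_{db}}$, and (after $\lambda=v=Z=0$) $R_{1\bar{1}k\bar{\ell}}=[Y,Y^{\ast}]_{k\ell}$ all check out and are exactly the unsymmetrized versions of (\ref{ChernRhat1}). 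In particular, you handled the one genuinely delicate point correctly: since $X$ never enters the Chern curvature (Lemma \ref{lemma3} involves only $D$), the condition $[Y,X^{\ast}]=0$ in part (iii) cannot be extracted from flatness alone, and must instead come from the first equation of (\ref{XYZ}) once $\lambda=Z=0$ --- which is precisely how the paper itself argues in the proof of Proposition \ref{prop4}.
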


Next, let us compute the components of the Chern curvature of $g$ under an admissible frame. By (\ref{Rhat}), Lemma \ref{lemma3}, and (\ref{XYZv}), a straight-forward computation leads to the following:
\begin{equation} \label{ChernRhat1}
\left\{ \begin{split} R_{1\bar{1}1\bar{1}} \, = \, -2\lambda^2 -|v|^2, \ \ \  
R_{i\bar{i}i\bar{i}} \, = \, |Z_{ii}|^2, \ \ \ 
\widehat{R}_{i\bar{i}k\bar{k}}  \, = \, \frac{1}{4}|Z_{ik}+Z_{ki}|^2, \ \ \ \ \forall \ 2\leq i\neq k\leq n,\\
\widehat{R}_{1\bar{1}i\bar{j}}  \, = \, \frac{1}{4} \big( vv^{\ast} +[Y,Y^{\ast}] -\lambda (Y+Y^{\ast}) -Z\overline{Z} -\,^t\!ZZ^{\ast} - \,^t\!Z\overline{Z} \big)_{ij}\,,  \ \ \ \ \forall \ 2\leq i, j\leq n,  \hspace{0.5cm}\\
\sum_{i=2}^n \widehat{R}_{1\bar{1}i\bar{i}}  \, = \, \frac{1}{4} \big( |v|^2  -\lambda \,\mbox{tr}(Y+Y^{\ast}) - 2\,\mbox{tr}(Z\overline{Z}) -|Z|^2\big), \hspace{4.6cm}\\
\end{split}
\right. 
\end{equation}
where the last line is obtained by taking trace in the line above. Now we are ready to prove the Chern case of Theorem \ref{thm} for ${\mathfrak g}$. Here again we do not need to assume that ${\mathfrak g}$ is unimodular.

\begin{proposition}  \label{prop4}
Let $({\mathfrak g},J,g)$ be a Hermitian Lie algebra which contains a $J$-invariant  abelian ideal of codimension $2$. Assume that the Chern holomorphic sectional curvature of $g$ is a constant $c$. Then $c=0$ and $g$ is Chern flat.
\end{proposition}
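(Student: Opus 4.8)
The plan is to read off the structure constants $\lambda$, $v$, $Z$, $X$, $Y$ one package at a time from the constancy of $H$, using the explicit curvature formulas (\ref{ChernRhat1}), and then to invoke part (iii) of Lemma \ref{lemma-codim2}. First I would use the two diagonal identities. Since $\widehat{R}_{i\bar{i}i\bar{i}}=R_{i\bar{i}i\bar{i}}$ by (\ref{Rhat}), the hypothesis $H=c$ together with (\ref{RhatC}) forces $R_{1\bar{1}1\bar{1}}=c$ and $R_{i\bar{i}i\bar{i}}=c$ for $2\le i\le n$. Reading these off (\ref{ChernRhat1}) gives $-2\lambda^2-|v|^2=c$ and $|Z_{ii}|^2=c$. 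The first shows $c\le 0$ and the second shows $c\ge 0$, so $c=0$; consequently $\lambda=0$, $v=0$, and $Z_{ii}=0$ for every $i$.

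Next I would exploit the off-diagonal symmetrized components. For $2\le i\neq k\le n$, (\ref{RhatC}) gives $\widehat{R}_{i\bar{i}k\bar{k}}=0$, so by (\ref{ChernRhat1}) we obtain $|Z_{ik}+Z_{ki}|^2=0$, i.e. $Z_{ik}=-Z_{ki}$. Combined with the already-established $Z_{ii}=0$, this says $Z$ is antisymmetric, ${}^{t}Z=-Z$ (whence $Z^{\ast}=-\overline{Z}$). The crucial point is then to eliminate $Z$ altogether. Substituting $\lambda=0$ and $v=0$ into the matrix identity $\widehat{R}_{1\bar{1}i\bar{j}}=\tfrac{c}{2}\delta_{ij}=0$ from (\ref{ChernRhat1}) and simplifying the three $Z$-terms using ${}^{t}Z=-Z$ collapses the expression to the matrix equation $[Y,Y^{\ast}]=Z\overline{Z}$. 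Taking the trace of both sides and using $\tr[Y,Y^{\ast}]=0$ yields $\tr(Z\overline{Z})=0$; but for an antisymmetric matrix one has $\tr(Z\overline{Z})=-|Z|^2$, so $|Z|^2=0$ and hence $Z=0$. (Equivalently, the last line of (\ref{ChernRhat1}) set to $\sum_i \tfrac{c}{2}=0$ combined with $\tr(Z\overline{Z})=-|Z|^2$ gives the same conclusion.)

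With $Z=0$ in hand, the matrix identity of the previous step reduces to $[Y,Y^{\ast}]=0$. It remains to produce the condition $[Y,X^{\ast}]=0$ demanded by Lemma \ref{lemma-codim2}(iii), and this comes for free from the Jacobi identity: setting $\lambda=0$ and $Z=0$ in the first line of (\ref{XYZ}) gives exactly $[X^{\ast},Y]=0$. Having assembled $\lambda=0$, $v=0$, $Z=0$, $[Y,Y^{\ast}]=0$, and $[Y,X^{\ast}]=0$, part (iii) of Lemma \ref{lemma-codim2} concludes that $g$ is Chern flat. Note that this argument never uses unimodularity of ${\mathfrak g}$, consistent with the remark preceding the proposition.

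I expect the main obstacle to be the elimination of $Z$. Individually, the diagonal identity (giving $Z_{ii}=0$), the off-diagonal symmetrization (giving antisymmetry), and the trace relation (giving $\tr(Z\overline{Z})=0$) each yield only partial information; it is precisely their combination — antisymmetry converting $\tr(Z\overline{Z})$ into $-|Z|^2$ — that forces $Z=0$. Everything else ($c=0$, $\lambda=v=0$, and the two commutator vanishings) then follows directly once the curvature formulas (\ref{ChernRhat1}) and the Jacobi relations (\ref{XYZ}) are at hand.
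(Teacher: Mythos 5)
Your proof is correct and follows essentially the same route as the paper's: the diagonal curvature components force $c=0$, $\lambda=v=0$, $Z_{ii}=0$, the off-diagonal symmetrized components make $Z$ skew-symmetric, a trace-of-commutator argument kills $Z$, the two commutator conditions $[Y,Y^{\ast}]=0$ and $[X^{\ast},Y]=0$ follow, and Lemma \ref{lemma-codim2}(iii) finishes. The only (immaterial) difference is ordering: you extract $\tr(Z\overline{Z})=0$ from the curvature identity $[Y,Y^{\ast}]=Z\overline{Z}$ coming from the second line of (\ref{ChernRhat1}), whereas the paper takes the trace of the first Jacobi relation in (\ref{XYZ}) and uses the curvature identity only afterwards to get $[Y,Y^{\ast}]=0$; both hinge on the same vanishing-trace-of-a-commutator observation and on skew-symmetry converting $\tr(Z\overline{Z})$ into $-|Z|^2$.
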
 

\begin{proof}
From the assumption, we have $R_{1\bar{1}1\bar{1}}=R_{i\bar{i}i\bar{i}} =c$ and $\widehat{R}_{i\bar{i}k\bar{k}}=\frac{c}{2}$ for any $2\leq i\neq k\leq n$. By the first line of (\ref{ChernRhat1}), we conclude that $c=0$ and $\lambda =0$, $v=0$, $Z_{ii}=0$, $Z_{ik}+Z_{ki}=0$ for each $2\leq i \neq k \leq n$. Thus $Z$ is skew-symmetric. By taking trace on the first line of (\ref{XYZ}), we get $\mbox{tr}(Z\overline{Z})=0$. Therefore 
$$ |Z|^2 = \mbox{tr}(\,^t\!Z\overline{Z}) = -\mbox{tr}(Z\overline{Z}) =0, $$
hence $Z=0$. The equation (\ref{XYZ}) now gives us $[X^{\ast}, Y]=0$. Finally, the second line in (\ref{ChernRhat1}) gives us $[Y,Y^{\ast}]=0$. So by part (iii) of Lemma \ref{lemma-codim2} we know that $g$ is Chern flat. This completes the proof of the proposition.
\end{proof}

We are now left with the Levi-Civita case for ${\mathfrak g}$, which is the main technical part of this article. Since almost abelian Lie algebras are special cases of our ${\mathfrak g}$, we know that ${\mathfrak g}$ must be assumed to be unimodular. From now on, let us assume that $({\mathfrak g},J,g)$ is a unimodular Hermitian Lie algebra which contains a $J$-invariant abelian ideal of codimension $2$, such that the Levi-Civita holomorphic sectional curvature $H^r$ of $g$ is a constant $c$. Our goal is to show that $c$ must be zero and $g$ must be Levi-Civita flat. It turns out that $g$ is actually  K\"ahler flat in this case. 

Let $e$ be an admissible frame. Write $B=Y-X$. Note that when we change the basis $\{ e_2, \ldots , e_n\}$ be a unitary matrix $U$, the matrix $Z$ is changed into $UZ\,^t\!U$. Since $\,^t\!Z+Z$ is a symmetric $(n-1)\times (n-1)$ matrix, as is well-known, there exists a unitary matrix $U$ such that $U(^t\!Z+Z)\,^t\!U$ is diagonal with non-negative entries. So by a unitary change of $\{e_2, \ldots , e_n\}$ is necessary, we may assume that $\,^t\!Z+Z$ is diagonal and with real non-negative diagonal entries. We will fix such an admissible frame from now on. By Lemma \ref{lemma2} and (\ref{codim2-torsion}) - (\ref{ChernRhat1}), we get
\begin{equation} \label{LeviRhat1}
\left\{ \begin{split} R^r_{1\bar{1}1\bar{1}} \, = \, -2\lambda^2 -\frac{3}{2}|v|^2, \ \ \  \ \ \ \ \ \ 
R^r_{i\bar{i}i\bar{i}} \, = \, |Z_{ii}|^2    - \frac{1}{2} |B_{ii}|^2, \ \ \  \ \ \ \forall \ 2\leq i\leq n, \hspace{1.75cm}\\
\widehat{R}^r_{i\bar{i}k\bar{k}}  \, = \,  - \frac{1}{8} \big(   |B_{ik}|^2 + |B_{ki}|^2 + 2 \mbox{Re} (B_{ii} \overline{B}_{kk} ) \big) ,  \ \ \ \ \ \  \forall \ 2\leq i\neq k\leq n, \hspace{2.45cm}  \\
\sum_{i=2}^n \widehat{R}^r_{1\bar{1}i\bar{i}}  \, = \, \frac{1}{4} \big( |v|^2  -\lambda \,\mbox{tr}(Y+Y^{\ast}) - 2\,\mbox{tr}(Z\overline{Z}) -|Z|^2\big) - \frac{1}{8} \big( |v|^2 +|B|^2 + |\,^t\!Z-Z|^2 \big) .\\
\end{split}
\right. 
\end{equation}
 Note that in the middle line we used the convention that $\,^t\!Z+Z$ is diagonal. 
Since ${\mathfrak g}$ is unimodular, by part (i) of Lemma \ref{lemma-codim2} we get $\mbox{tr}(B)=-\lambda$. Taking trace in the first line of (\ref{XYZ}) and using the fact $X=Y-B$, we obtain
\begin{equation} \label{traceL}
\mbox{tr}(Z\overline{Z}) = \lambda \,\mbox{tr}(X^{\ast}+Y) = \lambda \,\mbox{tr}(Y^{\ast}+Y) +\lambda^2.
\end{equation}

Now we are ready to state and prove the following proposition, which is the main technical part of the article:

\begin{proposition}  \label{prop5}
Let $({\mathfrak g},J,g)$ be a unimodular Hermitian Lie algebra which contains a $J$-invariant  abelian ideal of codimension $2$. Assume that the Levi-Civita holomorphic sectional curvature of $g$ is a constant $c$. Then $c=0$ and $g$ is K\"ahler flat.
\end{proposition}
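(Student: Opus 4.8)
The plan is to use the \emph{full} content of $H^r=c$ encoded in (\ref{RrhatC}), not only the diagonal and summed components displayed in (\ref{LeviRhat1}). Writing $B=Y-X$ for the torsion matrix $(T^j_{1i})$ and retaining the normalization that ${}^t\!Z+Z$ is real diagonal, I would first read off from $R^r_{1\bar11\bar1}=c$ and $R^r_{i\bar ii\bar i}=c$ the two elementary facts that $c=-2\lambda^2-\tfrac32|v|^2\le 0$, with equality if and only if $\lambda=v=0$, and that $|B_{ii}|^2=2(|Z_{ii}|^2-c)\ge 2|c|$ for every $i$. Hence it suffices to prove $c=0$: once $c=0$, the first relation gives $\lambda=v=0$, and the remaining relations will have to force $Z=0$ and $B=0$ (that is, $X=Y$); then $g$ is K\"ahler by Lemma \ref{lemma-codim2}(ii), and a unimodular K\"ahler Lie algebra is flat by Hano's theorem recalled in the introduction, so $g$ is K\"ahler flat.

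The core of the argument is the off-diagonal Hermitian matrix relation $(\widehat R^r_{1\bar1 i\bar j})_{2\le i,j\le n}=\tfrac c2\,I_{n-1}$, which is part of (\ref{RrhatC}) but is not spelled out in (\ref{LeviRhat1}). I would compute this matrix from Lemma \ref{lemma2}, the Chern data (\ref{ChernRhat1}), and the torsion (\ref{codim2-torsion}): the torsion correction contributes $-\tfrac18\bigl(B^\ast B+vv^\ast+PP^\ast\bigr)$, where $P={}^t\!Z-Z$ is the matrix of $(T^1_{ij})$, so the relation rearranges into a matrix identity of the shape $B^\ast B=-4c\,I_{n-1}+vv^\ast+2[Y,Y^\ast]-2\lambda(Y+Y^\ast)-2Z\overline{Z}-2\,{}^t\!ZZ^\ast-2\,{}^t\!Z\overline{Z}-PP^\ast$. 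I would then eliminate $[Y,Y^\ast]$ using the first Jacobi relation in (\ref{XYZ}) rewritten through $X=Y-B$, and use unimodularity $\mathrm{tr}(B)=-\lambda$ together with the trace identity (\ref{traceL}). The goal is to reorganize this identity, or a suitable quadratic-form consequence of it, into a single scalar relation of the form (sum of nonnegative terms)$\,=c\cdot(\text{positive constant})$; since $B^\ast B\succeq 0$ and $c\le 0$, this forces $c=0$ and the simultaneous vanishing of the torsion quantities.

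The step I expect to be the main obstacle is precisely this passage to $c=0$, because taking the \emph{trace} of the matrix relation is provably insufficient. The trace merely reproduces the summed relation in (\ref{LeviRhat1}), and combining it with the summed forms of the $i\bar ik\bar k$ and diagonal relations and with (\ref{traceL}) yields only the dimension-dependent identity
\[
c\left(\tfrac{(n-1)(n+1)}{2}+\tfrac{1}{12}\right)=-\tfrac16\,\lambda^2-\tfrac38\,|B|^2,
\]
whose two sides are both nonpositive and which ties $c$ to $\lambda$ and $\|B\|$ without pinning down $c=0$. Thus one genuinely needs the matrix relation in full, together with the second Jacobi relation $\lambda Z=Z\,{}^t\!X+YZ$ to control the $Z$-terms. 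The delicate point is to disentangle the coupling among $X$, $Y$, and $Z$ — in particular to show that the off-diagonal part of $B$ and all of $Z$ must vanish — so that the positive-semidefiniteness of $B^\ast B$ can be played against the term $-4c\,I_{n-1}$ to close the estimate.

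Once $c=0$, $\lambda=0$, $v=0$, $Z=0$, and $B=0$ have been established, the conclusion is immediate. Then $X=Y$, $v=0$, and ${}^t\!Z=Z$ hold, so $g$ is K\"ahler by Lemma \ref{lemma-codim2}(ii); being unimodular and K\"ahler, $g$ is flat, and therefore $g$ is K\"ahler flat as claimed.
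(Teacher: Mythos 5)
Your proposal does not actually prove the proposition. The decisive step --- passing from the matrix identity $B^\ast B=-4c\,I_{n-1}+vv^\ast+2[Y,Y^\ast]-2\lambda(Y+Y^\ast)-2Z\overline{Z}-2\,{}^t\!ZZ^\ast-2\,{}^t\!Z\overline{Z}-PP^\ast$ to the conclusion $c=0$, $Z=0$, $B=0$ --- is explicitly left as a ``main obstacle'' that you ``expect'' to resolve by some reorganization into a sum of nonnegative terms, but this reorganization is never exhibited. Since that step is the entire content of the proposition (your endgame, once $c=0$ and all structure constants vanish, is correct and routine), the proposal is a strategy outline with the crucial argument missing. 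Your preliminary computations (the sign $c\le 0$ from $R^r_{1\bar11\bar1}$, the torsion correction $-\tfrac18(B^\ast B+vv^\ast+PP^\ast)$, and the resulting matrix identity) do check out, but they are the easy part.

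Moreover, the claim with which you justify the detour through the full matrix relation is false. You assert that the scalar data --- the diagonal relations, the double-summed $i\bar ik\bar k$ relations, the traced $1\bar1i\bar i$ relation, (\ref{traceL}), and unimodularity --- are ``provably insufficient'' to force $c=0$, because one particular linear combination of them yields only $c\bigl(\tfrac{(n-1)(n+1)}{2}+\tfrac{1}{12}\bigr)=-\tfrac16\lambda^2-\tfrac38|B|^2$. But that combination eliminates $|{}^t\!Z+Z|^2$ and $|v|^2$, and in doing so discards exactly the information that closes the argument; it shows only that \emph{this} combination is insufficient. The paper's proof uses precisely those scalar relations and nothing more: from (\ref{eq:L3}), $4c(n-1)=|v|^2+2\lambda^2-2|{}^t\!Z+Z|^2-|B|^2$, and (\ref{eq:L4}), $2cn(n-1)=|{}^t\!Z+Z|^2-|B|^2-\lambda^2$, one \emph{subtracts} instead of eliminating, obtaining $2c(n-1)(2-n)=|v|^2+3\lambda^2-3|{}^t\!Z+Z|^2\le\tfrac94|v|^2+3\lambda^2=-\tfrac32c$; if $c<0$ this gives $(n-1)(n-2)\le\tfrac34$, impossible for $n\ge3$. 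For $n=2$, unimodularity gives $|B|^2=|B_{22}|^2=\lambda^2$, and the diagonal relations $c=-2\lambda^2-\tfrac32|v|^2=|Z|^2-\tfrac12\lambda^2$ combine to $3c=4|Z|^2+\tfrac32|v|^2\ge0$, forcing $c=0$. So what is needed is not more tensorial information (the matrix relation) but a combination of the scalar identities that retains $|{}^t\!Z+Z|^2$ with an exploitable sign, together with the case split $n\ge3$ versus $n=2$; your route, even if it could be completed, would be solving a harder problem than necessary.
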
 

\begin{proof}
We start from the assumption that $H^r=c$. Fix an admissible frame $e$ so that $^t\!Z+Z$ is diagonal with non-negative entries. By (\ref{LeviRhat1}) we have
\begin{eqnarray} 
& & c \ = \ -2\lambda^2 -\frac{3}{2}|v|^2 \ = \ |Z_{ii}|^2    - \frac{1}{2} |B_{ii}|^2 ,  \ \ \ 2\leq i\leq n, \label{eq:L1}\\
&& \frac{c}{2}(1+\delta_{ik} ) \ = \ \frac{1}{4} |Z_{ik}+Z_{ki}|^2 - \frac{1}{8} \big(   |B_{ik}|^2 + |B_{ki}|^2 + 2 \mbox{Re} (B_{ii} \overline{B}_{kk} ) \big),  \ \ \ 2\leq i, k\leq n, \label{eq:L1b} \\
&& \frac{c}{2}(n-1) \ = \  \frac{1}{4} \big( |v|^2  -\lambda \,\mbox{tr}(Y+Y^{\ast}) - 2\,\mbox{tr}(Z\overline{Z}) -|Z|^2\big) - \frac{1}{8} \big( |v|^2 +|B|^2 + |^t\!Z-Z|^2 \big).   \label{eq:L2}
\end{eqnarray}
Plugging (\ref{traceL}) into (\ref{eq:L2}), we get
$$ -4c(n-1)+|v|^2+2\lambda^2 -|B|^2 \ = \ 6\,\mbox{tr}(Z\overline{Z}) + 2|Z|^2 + |^t\!Z-Z|^2. $$
Since $|^t\!Z-Z|^2=2|Z|^2 - 2\,\mbox{tr}(Z\overline{Z})$ and  $|^t\!Z+Z|^2=2|Z|^2 + 2\,\mbox{tr}(Z\overline{Z})$, the right hand side of the line above is equal to $2|^t\!Z+Z|^2= 8\sum_{i=2}^n |Z_{ii}|^2$, so the equality becomes
\begin{equation} \label{eq:L3}
4c(n-1) \, = \, |v|^2 + 2\lambda^2 - 2 |^t\!Z+Z|^2 - |B|^2.
\end{equation}
The first equality of (\ref{eq:L1}) implies that $c\leq 0$. We proceed with the proof by establishing the following three claims:

\vspace{0.1cm}

\noindent {\bf Claim 1:} If $c=0$, then $g$ is K\"ahler flat.

\vspace{0.1cm}

In this case, by the first equality of (\ref{eq:L1}), we get that  $\lambda =0$ and $v=0$. By (\ref{traceL}) we obtain $\mbox{tr}(Z\overline{Z})=0$. Now  by  (\ref{eq:L2}) we conclude that $Z=0$ and $B=0$. The first equation of (\ref{XYZ}) tells us $[Y,Y^{\ast}]=0$, so $g$ is K\"ahler and flat by part (ii) and (iii) of Lemma \ref{lemma-codim2}. 

\vspace{0.1cm}

\noindent {\bf Claim 2:} If $n\geq 3$, then $c=0$.

\vspace{0.1cm}

Summing up $i$ and $k$ from $2$ to $n$ in (\ref{eq:L1b}) and using the fact that $\mbox{tr}(B)=-\lambda$,  we get
\begin{equation} \label{eq:L4}
2c\,n(n-1) \, = \, |^t\!Z+Z|^2 - |B|^2 - \lambda^2.
\end{equation}
Subtract (\ref{eq:L4}) from (\ref{eq:L3}), we obtain
$$ 2c(n-1)(2-n)=|v|^2+ 3\lambda^2 - 3|^t\!Z+Z|^2 \leq \frac{9}{4}|v|^2+ 3\lambda^2 =\frac{3}{2}(\frac{3}{2}|v|^2+2\lambda^2) = -\frac{3}{2}c.$$
If $c<0$, then the above inequality gives us
 $2(n-1)(2-n) \geq - \frac{3}{2}$, or equivalently, $(n-1)(n-2)\leq \frac{3}{4}<1$, contradicting with the assumption that $n\geq 3$, so the claim is proved.
 
 \vspace{0.1cm}

\noindent {\bf Claim 3:} If $n=2$, then $c=0$.

\vspace{0.1cm}
 
In this case, $B$ and $Z$ are $1\times 1$ matrices, and we have $B_{22}=\mbox{tr}(B)=-\lambda$, thus $|B|^2=\lambda^2$. Now the equations (\ref{eq:L1})  gives us
 $$ c=-2\lambda^2 -\frac{3}{2}|v|^2 = |Z|^2    - \frac{1}{2}\lambda^2 .$$
 By eliminating  and $\lambda^2$, we end up with
 $ 3c = 4|Z|^2 + \frac{3}{2}|v|^2$, which implies that $c\geq 0$ hence $c=0$. 
 
Putting these claims together, we have completed the proof of Proposition \ref{prop5}.
\end{proof}

The combination of Propositions \ref{prop4} and \ref{prop5} give a proof to Theorem \ref{thm} in the Levi-Civita case. 

\vspace{0.5cm}

\noindent\textbf{Acknowledgments.} {We would like to thank Haojie Chen, Shuwen Chen, Lei Ni, Xiaolan Nie, Kai Tang, Bo Yang, Yashan Zhang, and Quanting Zhao for their interests and/or helpful discussions.}

\vspace{0.5cm}


\end{document}